\newtheorem{theorem}{Theorem}[section]
\newtheorem{lemma}[theorem]{Lemma}
\newtheorem{proposition}[theorem]{Proposition}
\newcommand{\Rd}{\mathbb{R}^d}
\newcommand{\boldc}{\mathbf{c}}
\newcommand{\Reals}{\mathbb{R}}
\newcommand{\Rn}{\mathbb{R}^n}
\title{Convergence to the complex balanced equilibrium for some chemical reaction-diffusion systems with boundary equilibria}
\author{
Gheorghe Craciun
\thanks{Department of Mathematics 
and Department of Biomolecular Chemistry, 
University of Wisconsin-Madison, 
{\tt craciun@math.wisc.edu}}
\and
Jiaxin Jin
\thanks{Department of Mathematics,
University of Wisconsin-Madison
{\tt jjin43@wisc.edu}}
\and
Casian Pantea
\thanks{Department of Mathematics, 
West Virginia University,  
{\tt cpantea@math.wvu.edu}}
\and
Adrian Tudorascu 
\thanks{Department of Mathematics,
West Virginia University, 
{\tt adriant@math.wvu.edu}}
}
\date{}
\begin{document}

\maketitle

%order references
%

\begin{abstract}
\noindent In this paper we study the rate of convergence to the complex balanced equilibrium for some chemical reaction-diffusion systems with boundary equilibria. We first analyze a three-species system with boundary equilibria in some stoichiometric classes, and whose right hand side is  bounded above by a quadratic nonlinearity in the positive orthant. We prove similar results on the convergence to the positive equilibrium for a fairly general two-species reversible reaction-diffusion network with boundary equilibria.

\end{abstract}

\section{Introduction} 
The dynamical behavior of spatially homogeneous mass-action reaction systems has been the focus of much research over the last fifty years. These ODE systems are usually high-dimensional, non-linear, and depend on a large number of parameters, which makes them generally difficult to study. However, a fertile theory started fifty years ago with work of Horn, Jackson and Feinberg \cite{Horn.1972, Horn.1974, Feinberg.1972} has been successful in addressing questions of existence and stability of positive equilibria, and persistence (nonextinction) of variables. Their work shows that, surprisingly, the large class of {\em complex balanced} mass-action systems have unique positive equilibria and admit a global Lyapunov function, which makes them locally asymptotically stable independently of reaction rate constant values. This robustness is relevant in applications, where exact values of system parameters are typically unknown. Moreover, Horn conjectured that the unique equilibria are in fact globally asymptotically stable \cite{Horn.1974}, a question known as the {\em Global Attractor Conjecture}. The conjecture stayed open until recent years, when new work fueled in part by advances in systems biology led to a series of partial results. It was shown that  trajectories of complex balanced systems either converge to the positive equilibrium or go to boundary equilibria \cite{Siegel.2000, Sontag.2001}, establishing that persistence implies global stability. A series of subsequent papers showed persistence for complex balanced systems in two variables and other classes of systems, and proved the Global Attractor Conjecture in two and three variables \cite{Anderson.2008aa, Craciun.2009aa, Anderson.2010aa, Anderson.2011aa, Pantea.2012, Craciun.2013aa, Gopalkrishnan.2013, Gopalkrishnan.2013}. This work led to a very recent proof of the Global Attractor Conjecture in full generality by Craciun \cite{Craciun.gac}.   

Much less is known about the corresponding reaction-diffusion models, although a number of recent papers have focused on extending the results above in the PDE setting. A promising venue for relating the PDE and ODE models is by way of space discretization (the method of lines). As proof of concept, the network $A+B\rightleftharpoons C$ was considered in \cite{FatmaCA} where it was shown that solutions of the discretized system converge to the solution of the PDE system as the space discretization grows finer. Solutions of the reaction-diffusion system $A+B\rightleftharpoons C$ have been shown to approach a positive spatially homogeneous distribution \cite{Rothe.1984} via semigroup theory. Newer work uses entropy techniques to prove global asymptotic stability for other systems, including dimerization networks  $2A\rightleftharpoons B$ \cite{DF06} and monomolecular networks \cite{Fellner.2015aa}. 

For general complex balanced systems it was shown that under the assumption of equal diffusion constants, their $\omega$-limit set consists of constant functions corresponding to equilibria of the space-homogeneous ODE system and that, moreover, the unique positive equilibrium is asymptotically stable. This is the analogous of the ``persistence implies global stability" result from the ODE setting, albeit in the case of equal diffusion constants. Recent results by Desvillettes, Fellner and collaborators \cite{DF14, DFT2} removed the requirement of equal diffusion constants, and showed that in the absence of boundary equilibria, the positive equilibrium of general complex balanced reaction-diffusion systems attracts all solutions with positive initial data. These papers also considered special cases of networks with boundary equilibria, where a more detailed analysis showed that positive solutions remain globally asymptotically stable. However, the general case of systems with boundary equilibria remains open, and the analysis of such systems is on a case-by-case basis.   

Our paper studies two cases of complex balanced reaction networks with boundary equilibria, and shows that under mild boundedness conditions on the initial data, solutions converge asymptotically to the unique positive equilibria. Namely, we consider the three-species system $A+nB{\rightleftharpoons}B+C$ (Theorem \ref{conv-theorem1}), and the two-species system 
$m_1A+n_1B{\rightleftharpoons}m_2A+n_2B$ (Theorem \ref{conv-theorem2}). 

In the remainder of this introductory section we set up terminology and notation, we discuss some of the techniques used here and in previous work, and we state our main theorems. Sections \ref{3x3-Syst} and \ref{2x2-Syst} contain the proofs of the results for the three-species and two-species systems. We conclude with a few remarks and open problems (Section \ref{sec:remarks}) and with an appendix collecting a few technical results needed in the paper.

\subsection{Terminology and previous results}
Let us consider $0<T\leq\infty$ and the semilinear parabolic system 
\begin{equation}\label{general-system}
\boldc_t-\mathcal{D}\Delta \boldc=R(\boldc)\mbox{ in }\Omega\times(0,T),\ \boldc(\cdot,0)=\boldc_0\mbox{ in }\Omega,
\end{equation}
where $\boldc:\Omega\times[0,T)\rightarrow\Rn$ is the vector of concentrations at spatial position $x\in \Omega$  (an open subset of $\Rd$) and time $t\in[0,\infty)$, $\mathcal{D}$ is a positive definite, diagonal $n\times n$ matrix, and $R:\Rn\rightarrow \Rn$ is a vector field whose components are polynomials (determined by the chemical reactions under consideration).  This system can be linear and ``trivial'' (at least in the sense that ``enough'' of its equations decouple), such as 
\begin{equation}\nonumber
\begin{array}{l}
\displaystyle
a_t-d_a\Delta a= -ka, \\
\displaystyle
b_t-d_b\Delta b=ka\mbox{ in }\Omega\times(0,T),\\
\displaystyle
a(\cdot,0)=a_0,\ b(\cdot,0)=b_0\mbox{ in }\Omega,
\end{array}
\end{equation}
(which corresponds to the reaction $A\displaystyle\rightarrow B$ with reaction rate $k>0$), linear and nontrivial (weakly coupled) such as 
\begin{equation}\nonumber
\begin{array}{l}
\displaystyle
a_t-d_a\Delta a= -k_1a+k_2b, \\
\displaystyle
b_t-d_b\Delta b=k_1a-k_2b\mbox{ in }\Omega\times(0,T),\\
\displaystyle
a(\cdot,0)=a_0,\ b(\cdot,0)=b_0\mbox{ in }\Omega,
\end{array}
\end{equation}
(which corresponds to 
$A  \overset{k_{1}}{\underset{k_2}{\rightleftharpoons}} B$). However, as soon as a reaction includes two or more reactants, the system becomes nonlinear in the zero order terms (semilinear). For example, the single reaction $A+B\overset{k}{\rightarrow} C$ yields  
\begin{equation}\nonumber
\begin{array}{l}
\displaystyle
a_t-d_a\Delta a= -kab, \\
\displaystyle
b_t-d_b\Delta b=-kab,\\
\displaystyle
c_t-d_c\Delta c=kab\mbox{ in }\Omega\times(0,T)\\
\displaystyle
a(\cdot,0)=a_0,\ b(\cdot,0)=b_0,\ c(\cdot,0)=c_0\mbox{ in }\Omega.
\end{array}
\end{equation}
We use this last system to illustrate some terminology and notation. Here $A,\ B,$ and $C$ are the three {\em species} of the network, and $A+B$ and $C$ are its {\em complexes}. In general, complexes are formal linear combinations of species with non-negative integer coefficients, and sit on both sides of a reaction arrow. It is useful to think of complexes as vectors in a natural way, for example $A+B$ corresponds to $y=(1,1,0)$, and $C$ to $y'=(0,0,1)$. 
The concentrations of $A,\ B,\ C$ are non-negative functions of time and space and are collected in the {\em concentration vector} $\boldc=(a,b,c)$. 
The {\em reaction rate} of a reaction is given by mass-action, and is proportional to the concentration of each reactant species. This way, the reaction $A+B\overset{k}{\rightarrow} C$ has rate $kab$. The {\em reaction rate constant} $k$ is a reaction-specific positive number. 
In general, the rate of a the reaction $y\overset{k}{\to} y'$ is given by 
$$k\boldc^{y}=k\displaystyle\prod_{i=1}^nc_i^{y_i},$$  
where $n$ is the number of species, and complexes $y$ and $y'$ are viewed as vectors, as illustrated above. 
The reaction rate $kab$ enters with negative sign in the equations for $a_t$ and $b_t$ ($A$ and $B$ are being consumed in the reaction), and with positive sign in the equation for $c_t$ ($C$ is being produced). The aggregate contribution of all reaction rates are collected in the vector $R(\boldc)=(-kab,-kab,kab)$. In general, this is given by        
$$R(\boldc):=\sum_{y\to y'} k_{y\to y'}\boldc^{y}(y'-y),$$
where $k_{y \to y'}$ is the rate constant of $y\to y'$ and the summation is over all reactions $y\to y'$ in the network.
Finally, $\mathcal{D}=\mathrm{diag}\{d_a,d_b,d_c\}\in M_{3\times 3}(\Reals)$ denotes the diagonal matrix of diffusion constants. 

In the previous example the first two equations have the benefit of being decoupled, but that feature is lost as soon as we allow for reversibility; indeed, corresponding to $A  + B\overset{k_{1}}{\underset{k_2}{\rightleftharpoons}} C$ we have 
\begin{equation}\label{nonlinear-eq1}
\begin{array}{l}
\displaystyle
a_t-d_a\Delta a= -k_1ab+k_2c, \\
\displaystyle
b_t-d_b\Delta b=-k_1ab+k_2c,\\
\displaystyle
c_t-d_c\Delta c=k_1ab-k_2c\mbox{ in }\Omega\times(0,T)\\
\displaystyle
a(\cdot,0)=a_0,\ b(\cdot,0)=b_0,\ c(\cdot,0)=c_0\mbox{ in }\Omega.
\end{array}
\end{equation}

When it comes to basic questions on the existence, uniqueness, smoothness and non-negativity of solutions (if the initial data components are nonnegative), for linear systems the answers are provided in the (by now, classical) literature (see, e.g., \cite{Protter-Weinberger}).  However, complexity adds quickly as more reactions and/or more reactants enter the system. There is no general result in the literature that guarantees long time existence, uniqueness and non-negativity of solutions, let alone smoothness and other, more delicate properties such as comparison principles and convergence to equilibrium. In this paper we discuss chemical reaction diffusion systems which have a specific structure relative to a {\it positive equilibrium}, i.e. a steady state solution with all positive components. 

In general, we say that an equilibrium point $c_0$ is a {\em complex balanced equilibrium} if for all complexes $\bar y$ we have  
$$\sum_{\bar y \to y} k_{\bar y \to y}c_0^{\bar y} = \sum_{y \to \bar y} k_{y \to \bar y}c_0^{y}$$ 
i.e., the total chemical flux that exits the complex $\bar y$ equals the total chemical flux that enters the complex $\bar y$ (for any choice of $\bar y$).  A chemical system is called a {\em complex balanced system} if it admits a positive complex balanced equilibrium. 
Due to the particular polynomial nature of $R(\boldc)$ in this case, the steady states are all constant vectors (i.e. independent of location, as well as of time). If at least one component of a steady state is null, then the corresponding state is said to be a {\it boundary equilibrium}. 
All systems arising from complex balanced CRDSs admit a ``canonical'' Lyapunov functional of the relative Boltzmann entropy type. Its general form (again, see, e.g., \cite{DFT2}), this logarithmic free relative energy functional reads
\begin{equation}\nonumber
E(t):=\sum_{i=1}^n\int_\Omega\bigg[c_i(x,t)\log\frac{c_i(x,t)}{c_{i,\infty}}-c_i(x,t)+c_{i,\infty}\bigg]dx,
\end{equation}
where $\boldc_\infty:=(c_{1,\infty},...,c_{n,\infty})$ is the constant vector denoting the positive complex balanced equilibrium. The entropy dissipation functional is computed by differentiating $E$ along trajectories; that is, once all the time derivatives of concentrations are replaced by their equation specific expressions and the Neumann BC are used to integrate by parts wherever the Laplacian appears, one gets
\begin{equation}\nonumber
D(t):=\sum_{i=1}^n d_i\int_\Omega\frac{|\nabla c_i(x,t)|^2}{c_i(x,t)}dx+\sum_{r=1}^{\rho}k_rc_\infty^{y_r}\int_\Omega\Phi\bigg(\frac{c^{y_r}}{c_\infty^{y_r}};\frac{c^{y'_r}}{c_\infty^{y'_r}}\bigg)dx,
\end{equation}
where $\rho$ is the number of reactions and $\Phi(x,y):=x\log(x/y)-x+y$. Of course, one gets exponential decay to zero for $E$ if one can prove that there exists a positive constant $\alpha$ such that 
\begin{equation}\label{EEDI}
D(t)\geq\alpha E(t)\mbox{ for all }t\geq 0.
\end{equation} 
Naturally, $E(t)$ should not only be identically zero when $\boldc(t)=\boldc_\infty$, but it should also be bounded below by some increasing function of the distance (from some norm) between $\boldc(t)$ and $\boldc_\infty$.

For complex balanced systems, in the spatially isotropic case ($\mathcal{D}=0$, so the PDE's are reduced to ODE's) there is some recent work by Craciun~\cite{Craciun.gac}, which  answers in the affirmative a long standing conjecture on the convergence to the positive equilibrium in each stoichiometric class, called the Global Attractor Conjecture. This conjecture states that regardless of the existence of boundary equilibria, trajectories starting in the positive orthant converge to the unique positive equilibrium in the corresponding stoichiometric class.  In the PDE case, the most general result concerns the case where there are no boundary equilibria. Very recently, Desvillettes, Fellner and Tang \cite{DFT2} showed that, contingent on the existence of suitable solutions (essentially, solutions that may not be classical but they are {\it renormalized} and do satisfy a weak entropy entropy-dissipation law), one obtains exponentially fast convergence to the equilibrium which lies in the same stoichiometric class as the initial data, which is merely assumed nonnegative and integrable over some bounded, $C^2$ domain in $\mathbb{R}^d$.  This is also a remarkably general result in the sense that the initial concentrations are only assumed to lie in $L^1(\Omega)$. This improvement (over the previous works, where $L^\infty$-bounds were imposed on the initial data) is achieved via the use of the Log-Sobolev inequality (see, e.g., \cite{DFT2}) in order to establish the {\it entropy-entropy dissipation inequality} (EEDI) \eqref{EEDI}. In all the previous works, the EEDI follows from the standard zero-average Poincar\'{e} inequality applied to the square roots of the concentration functions, combined with their uniform $L^\infty$-bounds (in space-time); these bounds need to be proved a priori.  This uniform $L^\infty$-bound is key to the proofs of  convergence to equilibrium in most of the works on this topic, (in fact, to our knowledge, the only exception comes when there are no boundary equilibria \cite{DFT2}) and the constant $\alpha$ from the EEDI \eqref{EEDI} tends to vanish as the $L^\infty$-bound on the solution blows up. We note that  \eqref{nonlinear-eq1} is one of the two systems studied in \cite{DF06}, and the authors use the uniform $L^\infty$-bound as available in the literature (for this particular system). In \cite{FatmaCA} the authors carry out the proof in some detail (adapted from a proof in \cite{ChenLiWright}), and show that the properties of the Neumann Heat Kernel involved in it hold for the discrete Neumann Heat Kernel as well; as a consequence, one can emulate the proof in the continuous case to obtain uniform $L^\infty$ bounds for the discretized problem. The main idea of the proof is a bootstrapping argument in which the bounds obtained on $a$ and $b$ in terms of $c$ (from the first two equations of \eqref{nonlinear-eq1} we get $a_t-k_a\Delta a\leq k_2c$ and $b_t-k_b\Delta b\leq k_2c$) are fed into the inequality $c_t-k_c\Delta c\leq k_1ab$ (from the third equation of \eqref{nonlinear-eq1}) to yield an $L^\infty$ bound on $c$ at some time $t\geq\delta>0$ in terms of a sublinear function of the bound at times $t\leq\delta/2$. The success of this  method relies on the right hand sides of the first two equations of \eqref{nonlinear-eq1} being bounded above in the positive orthant by a constant multiple of $c$. It therefore fails for systems with all multiple-species complexes (no single species in any complex) or systems where single species appear more than once in any given complex (such as $A+B{\rightleftharpoons}2C$). These bounds are crucial to the proof of consistency and, ultimately, convergence \cite{FatmaCA}.

Our method to prove these uniform $L^\infty$ bounds seems confined to one-D, as it uses the stronger form of Poincar\'{e}'s inequality on a bounded interval (where the essential sup norm of a Sobolev function is bounded in terms of its average and the $L^1$ norm of its Sobolev derivative). This leads to a uniform estimate (with respect to $t$) of the $L^2$ norm of the solutions in cylinders of type $(t,t+1) \times (0,1)$, which, once more using $d=1$, leads to a uniform in time $L^\infty$ bound in the case where one of the right hand side polynomials is bounded above by a quadratic polynomial. It is an important improvement that we can deal with the at most quadratic case, since previous results (when boundary equilibria are present) only dealt with two species reactions or, if at least three species are present, the right hand side of an equation from the system is dominated by a first-degree polynomial \cite{DF06}, \cite{DF08}, \cite{DF14}, \cite{DFPV07}, \cite{Fellner.2015aa} etc.

\subsection{The three-species system.}
A case not covered so far in the literature is $A+nB{\rightleftharpoons}B+C$ ($n\geq2$ is an integer); this has boundary equilibria in some (not all) stoichiometric classes, translates to a $3\times 3$ system ($2\times 2$ being, in general, easier to treat via the standard maximum principle for the heat equation), and the right hand side is not bounded above (in the positive orthant) by a linear term. More precisely, the PDE system we are looking at is
\begin{equation}\nonumber
\begin{cases}
a_t-d_a \Delta a=-k_1ab^n+k_2bc & \\ 
b_t-d_b \Delta b=-k_1ab^n+k_2bc &  \mbox{ in }\Omega\times(0,\infty) \\
c_t-d_c \Delta c=k_1ab^n-k_2bc  &   \\
\nabla{a} \cdot \nu=\nabla{b} \cdot \nu=\nabla{c} \cdot \nu=0 &  \mbox{ on }\partial\Omega\times(0,\infty)\\
a(\cdot,0)=a_0,\ b(\cdot,0)=b_0,\ c(\cdot,0)=c_0 &  \mbox{ in } \Omega, 
\end{cases}
\end{equation}
where $\nu$ is the (outward) normal vector to $\partial\Omega$.
Notice that by rescaling time $t$, space $x$ and the concentrations $(a,b,c)$, we can always assume that the reaction rates and the domain volume are 1; to fix ideas, let us choose, without loss of generality, $n=2$ and $k_1=k_2=1$. The only important restriction we impose is the choice of spatial dimension $d=1$. This is necessary due to the uniform $L^\infty$ bound on the solution, which is crucial to our analysis and we can (for the time being) only justify in the $d=1$ case; that is, only the estimates in subsection \ref{L2local} are predicated on this restriction. Therefore, the $3 \times 3$ reaction-diffusion system considered here is
\begin{equation}\label{3x3-system}
\begin{cases}
a_t-d_a a_{xx}=-ab^2+bc & \\ 
b_t-d_b b_{xx}=-ab^2+bc &  \mbox{ in }(0,1)\times(0,\infty) \\
c_t-d_c c_{xx}=ab^2-bc  &   \\
a_x=b_x=c_x=0 &  \mbox{ on }\{0,1\}\times(0,\infty)\\
a(\cdot,0)=a_0,\ b(\cdot,0)=b_0,\ c(\cdot,0)=c_0 &  \mbox{ in } (0,1). 
\end{cases}
\end{equation}
The conserved (in time) quantities here are $\bar a+\bar c$ and $\bar b+\bar c$, where $\bar f$ denotes the average of the function $f$ over $\Omega$. If $b_\infty>0$ we obviously can only have a boundary equilibrium at $(0,b_\infty,0)$ (i.e. $a_\infty=c_\infty=0$). The conservation of $\bar a+\bar c$ forces $a\equiv c\equiv 0$, the second equation of the system decouples into $b_t-d_b b_{xx}=0$, and $b_\infty=\bar b_0$. This steady state cannot be approached from any initial state for which $\bar a_0+\bar c_0>0$, so no initial data in the positive orthant will converge to it. The other nontrivial type of steady states is given by $b_\infty=0$ and $a_\infty+c_\infty=\bar a_0+\bar c_0>0$. If $c_\infty=0$, we get $b\equiv c\equiv 0$ (from the conservation of $\bar b+\bar c$), so, once again, an equilibrium of the type $(a_\infty,0,0)$ cannot be approached from the positive orthant.  We are left with the case $(a_\infty,0,c_\infty)$ for $a_\infty c_\infty>0$. We do not know how to prove that $\bar a_0\bar b_0 \bar c_0>0$ prevents convergence to such steady states, but in this paper we will prove a weaker statement, namely:
\begin{theorem}\label{conv-theorem1}
If $a_0,\ b_0, c_0\in L^\infty(0,1)$ are a.e. nonnegative and such that $\bar a_0\bar b_0 \bar c_0>0$ and $b_0\geq\delta$ a.e. in $(0,1)$ for some $\delta>0$, then the (unique) global classical solution to \eqref{3x3-system} converges asymptotically exponentially fast (at an explicit rate) to the unique positive equilibrium in its stoichiometric class.
\end{theorem}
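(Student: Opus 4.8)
The plan is to run the relative-entropy/entropy-dissipation program, using \eqref{EEDI} as the engine for exponential decay, while concentrating the effort on the two ingredients that are genuinely obstructed by the boundary equilibria: a uniform-in-time $L^\infty$ bound and a uniform positive lower bound for $b$. I would begin with well-posedness: local existence, uniqueness, nonnegativity and smoothness of $\boldc=(a,b,c)$ follow from standard semilinear parabolic theory together with the quasi-positivity of the reaction terms (the negative contributions in each equation carry a factor of the corresponding species), which keeps the nonnegative orthant invariant. Global existence then comes for free once the $L^\infty$ bound is in hand.

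Next I would establish the uniform-in-time bound $0\le a,b,c\le M$ by the one-dimensional argument of subsection~\ref{L2local}: a local-in-time $L^2$ estimate on cylinders $(t,t+1)\times(0,1)$, upgraded through the strong one-dimensional Poincar\'e inequality (bounding the sup-norm of a Sobolev function by its average and the $L^1$-norm of its derivative) to a bound that tolerates the quadratic term $ab^2$. This uniform bound is what makes all the Poincar\'e and Csisz\'ar--Kullback--Pinsker constants used later independent of time.

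The crux, where the boundary equilibria bite, is a uniform lower bound $b(\cdot,t)\ge\gamma>0$ for large $t$, and I expect this to be the main obstacle. The hypothesis $b_0\ge\delta$ guarantees only strict positivity for each finite time: since $c-ab\ge -M^2=:-K$, we have $b_t-d_b b_{xx}\ge -Kb$, so $e^{Kt}b$ is a Neumann supersolution of the heat equation and $b\ge\delta e^{-Kt}>0$. The point that turns this into persistence is that any candidate limiting boundary equilibrium $(a_\infty',0,c_\infty')$ with $a_\infty'c_\infty'>0$ is repelling in the $b$-direction: near such a state $c-ab\approx c_\infty'>0$, so $b_t-d_b b_{xx}\ge\tfrac{c_\infty'}{4}b$ and $b$ grows exponentially until it escapes any fixed small neighborhood. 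I would combine this instability estimate with a LaSalle/Barbalat argument: from $\int_0^\infty D\,dt\le E(0)<\infty$ and the uniform bounds one gets $D(t)\to0$, so every $\omega$-limit point is a spatially homogeneous equilibrium in the stoichiometric class; the instability excludes $(a_\infty',0,c_\infty')$ and the conservation of $\bar a+\bar c$ excludes $(0,b_\infty,0)$, leaving only $\boldc_\infty$. Hence $\boldc(\cdot,t)\to\boldc_\infty$ qualitatively, and in particular $b\ge b_\infty/2=:\gamma$ for all large $t$; note this step uses only the Lyapunov structure, not \eqref{EEDI}, so no circularity arises.

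Finally, with $\gamma\le b\le M$ secured, I would close \eqref{EEDI} and read off the rate. The diffusion part of $D(t)$, via the one-dimensional Poincar\'e inequality applied to $\sqrt{c_i}$, controls the spatial oscillation $\sum_i\int_\Omega|c_i-\bar c_i|^2$; the reaction part, now nondegenerate because $b\ge\gamma$, together with the complex-balance relation $a_\infty b_\infty=c_\infty$ and the conserved quantities $\bar a+\bar c$ and $\bar b+\bar c$, controls the finite-dimensional distance of the averages $(\bar a,\bar b,\bar c)$ to $\boldc_\infty$. This yields $D(t)\ge\alpha E(t)$ with an explicit $\alpha=\alpha(\gamma,M,d_a,d_b,d_c)>0$, whence $E(t)\le E(0)e^{-\alpha t}$ by Gr\"onwall, and a Csisz\'ar--Kullback--Pinsker inequality converts this into exponential $L^1$ convergence of $\boldc(\cdot,t)$ to $\boldc_\infty$, which upgrades to stronger norms using the $L^\infty$ bound.
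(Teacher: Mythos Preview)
Your overall architecture (uniform $L^\infty$ bound via the one-dimensional argument of subsection~\ref{L2local}, then an EEDI) matches the paper, but the central step---securing control on $b$ from below---is handled quite differently, and your version carries a real cost.

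You propose a LaSalle/Barbalat route: $\int_0^\infty D\,dt<\infty$ together with instability of the boundary state $(a_\infty',0,c_\infty')$ forces qualitative convergence to $\boldc_\infty$, hence a pointwise bound $b\ge\gamma$ eventually. This can be made to work, but (i) it needs precompactness of the orbit in a strong topology (parabolic smoothing), which you do not invoke, to run LaSalle for the PDE, and more importantly (ii) it is inherently non-quantitative: the time $T$ after which $b\ge\gamma$ holds and the value of $\gamma$ itself are not computable from your argument. That undercuts the ``explicit rate'' claimed in the theorem---you end up with an explicit exponent acting only after an inexplicit transient.

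The paper avoids dynamical-systems machinery altogether and stays quantitative throughout. The key device is to look at $1/b$: dividing the $b$-equation by $-b^2$ and using the uniform bound $a\le k$ gives
\[
\partial_t\Big(\frac{1}{b}\Big)-d_b\Delta\Big(\frac{1}{b}\Big)=a-\frac{c}{b}-\frac{2d_b|\nabla b|^2}{b^3}\le a\le k,
\]
so by the maximum principle $\|1/b(\cdot,t)\|_\infty\le\beta+kt$ with $\beta=\|1/b_0\|_\infty$; that is, $b(x,t)\ge(\beta+kt)^{-1}$, a polynomial (not merely exponential) pointwise lower bound. Since the reaction part of $D$ carries an overall factor of $b$, this yields a time-dependent EEDI $D\ge C_5(\beta+kt)^{-1}E$, and Gronwall gives the algebraic decay $E(t)\le E(0)(\beta+kt)^{-C_5/k}$. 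A Csisz\'ar--Kullback step then produces an \emph{explicit} $T_\epsilon$ with $\bar a,\bar b,\bar c>\epsilon^2$ for all $t>T_\epsilon$. The final, time-independent EEDI in the paper uses only these \emph{average} lower bounds (in particular $\overline{\sqrt b}\ge\epsilon^2/\sqrt k$, which lets one factor out $(\overline{\sqrt b}/\sqrt{b_\infty})^2$ and reduce to the boundary-free network $A+B\rightleftharpoons C$); a pointwise $b\ge\gamma$ is never needed.

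In short: your outline is not wrong, but the paper's two-stage route ``$1/b$ maximum principle $\Rightarrow$ algebraic entropy decay $\Rightarrow$ explicit average lower bounds $\Rightarrow$ uniform EEDI'' is what actually delivers the fully explicit constants the theorem promises, and it sidesteps the PDE-LaSalle technicalities you would otherwise have to supply.
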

The above theorem will be proved in Section \ref{3x3-Syst}.

\subsection{The two-species system}
Finally, in Section \ref{2x2-Syst} we prove similar results on the convergence to the positive equilibrium for a two-species reversible reaction-diffusion network with boundary equilibria: $$m_1A+n_1B{\rightleftharpoons}m_2A+n_2B.$$
Assume $m_1 > m_2 > 0$ and $0 < n_1 < n_2$ with $\bar{m} = m_1-m_2$, $\bar{n} = n_2-n_1$ and $\bar{m} < \bar{n}$.
The $2 \times 2$ reaction-diffusion system is
\begin{equation}\label{2x2-system}
\begin{cases}
a_t-d_a \Delta a= \bar m(a^{m_2}b^{n_2}-a^{m_1}b^{n_1}) &  \mbox{ in }\Omega\times(0,\infty) \\
b_t-d_b \Delta b= \bar n(a^{m_1}b^{n_1}-a^{m_2}b^{n_2}) &  \mbox{ in }\Omega\times(0,\infty) \\
\nabla{a} \cdot \nu=\nabla{b} \cdot \nu=0 &  \mbox{ on }\partial\Omega\times(0,\infty)\\
a(\cdot,0)=a_0,\ b(\cdot,0)=b_0 &   \mbox{ in } \Omega .
\end{cases}
\end{equation}

\begin{theorem}\label{conv-theorem2}
Let $\Omega$ be a bounded domain of $\mathbb{R}^d$ with a smooth boundary, for some integer $d\geq1$. If $ 0 < \alpha \leq a_0(x), 
b_0(x) \leq \beta < +\infty$ for a.e. $x$ in $\Omega$, then the (unique) global classical solution to \eqref{2x2-system} converges asymptotically exponentially (at an explicit rate) to the unique positive equilibrium in its stoichiometric class. 
\end{theorem}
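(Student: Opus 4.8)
The plan is to combine two structural features of \eqref{2x2-system}: the reaction field admits invariant rectangles adapted to the balance manifold, and the network is detailed (hence complex) balanced, so the relative entropy introduced above is a strict Lyapunov functional. The first feature is what lets the result hold in every dimension $d\ge 1$: in contrast to the three-species system, here the uniform $L^\infty$ bound comes together with a \emph{uniform positive lower bound} directly from an invariant-region argument, so no dimension restriction is needed and the boundary equilibria are automatically avoided. I begin by recording the algebra of the nonlinearity. Setting $W:=a^{m_1}b^{n_1}-a^{m_2}b^{n_2}$, the right-hand sides are $R_a=-\bar m W$ and $R_b=\bar n W$, whence $\bar n R_a+\bar m R_b\equiv 0$; integrating over $\Omega$ and using the no-flux conditions shows that $\bar n\,\overline a+\bar m\,\overline b$ is conserved, where $\overline f$ is the spatial average of $f$. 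The positive equilibrium in the stoichiometric class is the unique point $(a_\infty,b_\infty)$ satisfying this conservation identity together with the balance relation $a_\infty^{\bar m}=b_\infty^{\bar n}$: the increasing curve $\{x^{\bar m}=y^{\bar n}\}$ through the origin meets the decreasing conservation line $\{\bar n x+\bar m y=\bar n\,\overline{a_0}+\bar m\,\overline{b_0}\}$ at exactly one, transversal, point of the open positive quadrant.

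Next I would establish the invariant region, which is the heart of the argument. Since all four exponents are positive, $R_a=0$ on $\{a=0\}$ and $R_b=0$ on $\{b=0\}$, which gives quasi-positivity and hence preservation of nonnegativity. More is true: factoring $W=a^{m_2}b^{n_1}(a^{\bar m}-b^{\bar n})$, consider any rectangle $[a_-,a_+]\times[b_-,b_+]$ whose two corners $(a_\pm,b_\pm)$ lie on the balance curve, i.e.\ $a_\pm^{\bar m}=b_\pm^{\bar n}$. On the face $a=a_+$ one has $b^{\bar n}\le b_+^{\bar n}=a_+^{\bar m}$, so $R_a=\bar m a_+^{m_2}b^{n_1}(b^{\bar n}-a_+^{\bar m})\le 0$; the three other faces are handled identically, so the reaction field points weakly inward on all of $\partial([a_-,a_+]\times[b_-,b_+])$. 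By the Chueh--Conley--Smoller invariant-rectangle theorem for systems with diagonal diffusion and Neumann data, such a rectangle is invariant. Choosing its corners small enough and large enough that $[\alpha,\beta]^2$ is contained in it, the assumption $0<\alpha\le a_0,b_0\le\beta$ confines the solution to this rectangle for all time. This furnishes at once global classical existence (by the standard parabolic bootstrap from an $L^\infty$ bound) and a two-sided bound $0<a_-\le a,b\le\max\{a_+,b_+\}<\infty$, uniform in space and time.

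With these bounds in hand I would run the entropy--entropy-dissipation scheme. Differentiating $E(t)$ along the flow, integrating by parts with the no-flux conditions, and using $a_\infty^{\bar m}=b_\infty^{\bar n}$, one obtains
\begin{equation}\nonumber
-\frac{d}{dt}E(t)=D(t)=d_a\int_\Omega\frac{|\nabla a|^2}{a}\,dx+d_b\int_\Omega\frac{|\nabla b|^2}{b}\,dx+\int_\Omega a^{m_2}b^{n_1}\,(a^{\bar m}-b^{\bar n})\,\log\frac{a^{\bar m}}{b^{\bar n}}\,dx,
\end{equation}
in which every integrand is nonnegative. The aim is the EEDI \eqref{EEDI}, namely $D(t)\ge\kappa E(t)$ for some positive constant $\kappa$ and all $t\ge 0$.

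To prove the EEDI I would argue as in the no-boundary-equilibria theory, now with genuinely uniform constants thanks to the two-sided bound. Rewriting the diffusion terms as $4d_a\int_\Omega|\nabla\sqrt a|^2$ and $4d_b\int_\Omega|\nabla\sqrt b|^2$, the Poincar\'{e} inequality controls the spatial oscillations $\|a-\overline a\|_{L^2}^2$ and $\|b-\overline b\|_{L^2}^2$. In the reaction term the lower bound makes the prefactor $a^{m_2}b^{n_1}$ bounded below, while on the compact range of $(a,b)$ the quantity $(a^{\bar m}-b^{\bar n})\log(a^{\bar m}/b^{\bar n})$ is comparable to $(a^{\bar m}-b^{\bar n})^2$, so the reaction term dominates $\int_\Omega(a^{\bar m}-b^{\bar n})^2\,dx$. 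Feeding the oscillation bounds back in and invoking the conservation identity through the transversality of the balance curve and the conservation line --- the finite-dimensional estimate converting smallness of $|\overline a^{\,\bar m}-\overline b^{\,\bar n}|$ along that line into closeness of $(\overline a,\overline b)$ to $(a_\infty,b_\infty)$ --- one bounds $D(t)$ below by a multiple of $\|a-a_\infty\|_{L^2}^2+\|b-b_\infty\|_{L^2}^2$, which in turn dominates $E(t)$ since the bounds make the entropy integrand comparable to a square. The EEDI yields $E(t)\le E(0)e^{-\kappa t}$, and the Csisz\'ar--Kullback--Pinsker inequality turns this into explicit exponential convergence of $(a,b)$ to $(a_\infty,b_\infty)$. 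I expect the main obstacle to be precisely this last step: the reaction dissipation alone only detects the balance manifold $\{a^{\bar m}=b^{\bar n}\}$, and it must be coupled quantitatively to the linear conservation law to single out the correct equilibrium, so the crux is a uniform transversality/coercivity estimate, and this is where the structural hypotheses $m_1>m_2>0$, $0<n_1<n_2$ and $\bar m<\bar n$ should enter the constants.
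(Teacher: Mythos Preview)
Your proposal is correct and sound, and the overall architecture---uniform two-sided $L^\infty$ bounds followed by an entropy--entropy-dissipation inequality---matches the paper. The genuine difference is in how the two-sided bounds are obtained. The paper does not use invariant rectangles; instead it shows by direct computation that the functionals $\int_\Omega\big(\tfrac{a^{\pm p}}{p\bar m}+\tfrac{b^{\pm q}}{q\bar n}\big)\,dx$ are nonincreasing in time whenever the exponents are linked by $(q\pm 1)/\bar n=(p\pm 1)/\bar m$, then sends $p\to\infty$ to extract explicit $L^\infty$ upper and lower bounds depending only on $\alpha,\beta,\bar m,\bar n$. Your Chueh--Conley--Smoller argument exploits exactly the same sign structure (namely that $(b^{\bar n}-a^{\bar m})$ and $(b^{q-1}-a^{p-1})$ have the same sign on the relevant faces), but packages it geometrically as an invariant rectangle with corners on the balance curve $a^{\bar m}=b^{\bar n}$. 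Your route is arguably cleaner and makes transparent why the result holds in every dimension; the paper's route has the advantage of producing fully explicit constants without invoking an external theorem, which matters because the statement promises an ``explicit rate''. For the EEDI step the two arguments are the same in spirit; the paper makes it quantitative via a $D_L/D_L^{\complement}$ splitting and a $\mu$-parametrization $\sqrt{\overline a}=\sqrt{a_\infty}(1+\mu_a)$, $\sqrt{\overline b}=\sqrt{b_\infty}(1+\mu_b)$ to compare $[(1+\mu_a)^{\bar m}-(1+\mu_b)^{\bar n}]^2$ with $\mu_a^2+\mu_b^2$ using the conservation law, which is precisely your ``transversality/coercivity'' step made explicit.
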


\section{Asymptotic decay for the three-species system}\label{3x3-Syst}
We consider the entropy functional $E(a,b,c)$ and the corresponding entropy dissipation (when computed along solutions)
$D(a,b,c)=-\frac{d}{dt}{E(a,b,c)}$ associated to the system:

\begin{equation}\label{entropy} 
\begin{split}
E(a,b,c)=\int_{0}^{1}{a(\ln a-1)}dx+\int_{0}^{1}{b(\ln b-1)}dx+\int_{0}^{1}{c(\ln c-1)}dx
\end{split} 
\end{equation}
and 
\begin{equation}\label{entropy-dissipation} 
\begin{split}
& D(a,b,c)= 4d_{a}\int_{0}^{1}{|\partial_{x}{\sqrt{a}}|^2}dx+4d_{b}\int_{0}^{1}{|\partial_{x}{\sqrt{b}}|^2}dx+4d_{c}\int_{0}^{1}{|\partial_{x}{\sqrt{c}}|^2}dx
\\& +\int_{0}^{1}{(ab^2-bc)\ln(\frac{ab^2}{bc})dx}.
\end{split} 
\end{equation}
We would also like to record (for later use) the following {\it conservation laws}
\begin{equation}\label{conservation-laws}
\begin{split}
& \int_0^1a(x,t)dx+\int_0^1c(x,t)dx=\int_0^1a_0(x)dx+\int_0^1c_0(x)dx =:M_1,
\\ & \int_0^1b(x,t)dx+\int_0^1c(x,t)dx=\int_0^1b_0(x)dx+\int_0^1c_0(x)dx =: M_2,  \end{split} \end{equation}
for all $t\geq0$. Note that these are simply obtained by adding equations 1 and 3 (respectively, 2 and 3) and integrating in space over $[0,1]$ by taking into account the boundary conditions. Note also that $M_1$ and $M_2$ are finite as long as $a_0, b_0, c_0 \in L^1(0,1)$.

\subsection{Local $L^2$ estimate (a priori estimate)}\label{L2local}

\begin{proposition}\label{Local-bd}
Let $(a,b,c)$ be a solution for \eqref{3x3-system} with initial condition $(a_0,b_0,c_0)$ such that $a_0>0,\ b_0>0,\ c_0>0$ a.e. in $[0,1]$ and $a_0\ln{a_0}, b_0\ln{b_0}, c_0\ln{c_0} \in L^1(0,1)$. Then there exists a real constant $C$ such that

$$\| {a} \|_{L^{2}([0,1] \times [\tau,\tau+1])},\ \| {b} \|_{L^{2}([0,1] \times [\tau,\tau+1])},\ \| {c} \|_{L^{2}([0,1] \times [\tau,\tau+1])} \leq C$$
for any $\tau>0$. 
\end{proposition}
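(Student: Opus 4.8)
The plan is to read the estimate off the dissipation of the entropy $E$, combine it with the conservation laws, and close the argument with the one-dimensional Sobolev embedding $W^{1,1}(0,1)\hookrightarrow L^\infty(0,1)$ in its strong form $\|f\|_{L^\infty(0,1)}\le\|f\|_{L^1(0,1)}+\|f'\|_{L^1(0,1)}$; this last step is precisely where the restriction $d=1$ is used.

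First I would observe that the entropy dissipation $D$ in \eqref{entropy-dissipation} is nonnegative. The three diffusion terms are manifestly nonnegative, and the reaction term $\int_0^1(ab^2-bc)\ln(ab^2/bc)\,dx$ is nonnegative since $(X-Y)(\ln X-\ln Y)\ge0$ for all $X,Y>0$. Moreover $s(\ln s-1)\ge-1$ for every $s>0$, so each summand in \eqref{entropy} is bounded below by $-1$ and hence $E(t)\ge-3$ on the unit interval. Consequently $E$ is non-increasing along the (classical) solution, and integrating $\tfrac{d}{dt}E=-D$ gives $\int_0^\infty D(t)\,dt\le E(0)+3<\infty$, where $E(0)$ is finite precisely because of the hypotheses $a_0\ln a_0,\,b_0\ln b_0,\,c_0\ln c_0\in L^1(0,1)$. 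Discarding the nonnegative reaction term then yields the uniform-in-$\tau$ bound
$$\int_\tau^{\tau+1}\!\!\int_0^1|\partial_x\sqrt a|^2\,dx\,dt,\ \int_\tau^{\tau+1}\!\!\int_0^1|\partial_x\sqrt b|^2\,dx\,dt,\ \int_\tau^{\tau+1}\!\!\int_0^1|\partial_x\sqrt c|^2\,dx\,dt\ \le\ C.$$

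Next I would record the spatial $L^1$ control coming from the conservation laws \eqref{conservation-laws}: since $a,b,c\ge0$ and the domain $(0,1)$ has unit measure, $\|a(t)\|_{L^1}\le M_1$, $\|b(t)\|_{L^1}\le M_2$ and $\|c(t)\|_{L^1}\le\min(M_1,M_2)$ for all $t\ge0$. The heart of the argument is then to apply the strong one-dimensional embedding to $f=a(\cdot,t)$; writing $a_x=2\sqrt a\,\partial_x\sqrt a$ and using Cauchy--Schwarz in space together with $\|\sqrt a(t)\|_{L^2}^2=\|a(t)\|_{L^1}\le M_1$ gives
$$\|a(t)\|_{L^\infty}\le\|a(t)\|_{L^1}+2\|\sqrt a(t)\|_{L^2}\|\partial_x\sqrt a(t)\|_{L^2}\le M_1+2\sqrt{M_1}\,\|\partial_x\sqrt a(t)\|_{L^2}.$$

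Finally I would interpolate $\|a(t)\|_{L^2}^2\le\|a(t)\|_{L^\infty}\|a(t)\|_{L^1}\le M_1\|a(t)\|_{L^\infty}$, insert the previous bound, and integrate over $[\tau,\tau+1]$; Cauchy--Schwarz in time converts $\int_\tau^{\tau+1}\|\partial_x\sqrt a(t)\|_{L^2}\,dt$ into $(\int_\tau^{\tau+1}\|\partial_x\sqrt a(t)\|_{L^2}^2\,dt)^{1/2}\le\sqrt C$, producing a bound on $\|a\|_{L^2([0,1]\times[\tau,\tau+1])}$ independent of $\tau$. The identical computation with $M_2$ (resp.\ $\min(M_1,M_2)$) in place of $M_1$ disposes of $b$ (resp.\ $c$). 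I do not expect a real obstacle in the calculation itself; the one genuine subtlety --- and the reason the scheme is confined to one space dimension --- is the embedding step, since for $d\ge2$ one only has $W^{1,1}\hookrightarrow L^{d/(d-1)}$ rather than $L^\infty$, and the passage from the square-root gradient control to an $L^2$ bound would fail to close.
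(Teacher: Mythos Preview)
Your proposal is correct and follows essentially the same strategy as the paper: bound the entropy dissipation from above by $E(0)+3$ to control $\int_0^\infty\!\int_0^1|\partial_x\sqrt a|^2\,dx\,dt$ uniformly, invoke the conservation laws for $L^1$ control, use the one-dimensional embedding $W^{1,1}(0,1)\hookrightarrow L^\infty(0,1)$ to obtain a pointwise bound, and finish with the interpolation $\|a\|_{L^2}^2\le\|a\|_{L^\infty}\|a\|_{L^1}$. The only cosmetic difference is that the paper applies the embedding directly to $\sqrt a$ (obtaining $a(x,t)\le 2\int_0^1 a\,dy+2\int_0^1|\partial_y\sqrt a|^2\,dy$), whereas you apply it to $a$ and then convert $\|a_x\|_{L^1}$ into $2\sqrt{M_1}\,\|\partial_x\sqrt a\|_{L^2}$ via Cauchy--Schwarz; both routes close equally well.
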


\begin{proof}

We start with the obvious inequality (which holds for all $x\in[0,1],\ t>0$)

\begin{equation*} 
\begin{split}
\bigg| \sqrt{a(x,t)}-\int_{0}^{1}\sqrt{a(y,t)}dy \bigg| \leq \int_{0}^{1}\big|\partial_{y} \sqrt{a(y,t)}\big|dy,
\end{split} 
\end{equation*}
then use H\"older's inequality to get
\begin{equation}\label{L-infinity-estimate} 
\begin{split}
a(x,t) & \leq \bigg(\int_{0}^{1}\sqrt{a(y,t)}dy+\int_{0}^{1}\big|\partial_{y} \sqrt{a(y,t)}\big|dy\bigg)^2 
\\& \leq 2\bigg(\int_{0}^{1}\sqrt{a(y,t)}dy\bigg)^2+2\bigg(\int_{0}^{1}\big|\partial_{y} \sqrt{a(y,t)}\big|dy\bigg)^2     
\\& \leq 2\int_{0}^{1}{a(y,t)}dy+2\int_{0}^{1}\big|\partial_{y} \sqrt{a(y,t)}\big|^2dy.
\end{split} 
\end{equation} 
Obviously, the above inequalities also hold for $b$ and $c$. Next we integrate the entropy dissipation in time to obtain 
\begin{equation*} 
 E(a(t),b(t),c(t))+  \int_{0}^{t}D(a(s),b(s),c(s))ds = E(a_0,b_0,c_0),
\end{equation*}
where we have only displayed the dependence on time of the components of the solution vector.
Since the last integrand in right hand side of \eqref{entropy-dissipation}  is nonnegative, we conclude

\begin{equation*} 
\begin{split}
& E(a(t),b(t),c(t))+  4d_{a}\int_{0}^{t}\int_{0}^{1}{|\partial_{x}{\sqrt{a}}|^2}dxdt+4d_{b}\int_{0}^{t}\int_{0}^{1}{|\partial_{x}{\sqrt{b}}|^2}dxdt+
\\ & 4d_{c}\int_{0}^{t}\int_{0}^{1}{|\partial_{x}{\sqrt{c}}|^2}dxdt 
\leq E(a_0,b_0,c_0).
\end{split} 
\end{equation*}

Since $x(\ln x-1) \geq -1$ for all $x\geq 0$ (at $x=0$ this holds in the limiting sense), we get

\begin{equation*} 
\begin{split}
& 4d_{a}\int_{0}^{t}\int_{0}^{1}{|\partial_{x}{\sqrt{a}}|^2}dxdt+4d_{b}\int_{0}^{t}\int_{0}^{1}{|\partial_{x}{\sqrt{b}}|^2}dxdt+4d_{c}\int_{0}^{t}\int_{0}^{1}{|\partial_{x}{\sqrt{c}}|^2}dxdt 
\\& \leq E(a_0,b_0,c_0)+3,   
\end{split} 
\end{equation*}
which implies
\begin{equation*} 
\begin{split}
& \| \partial_{x}{\sqrt{a}} \|_{L^{2}([0,1] \times [0,t])}^{2}+\| \partial_{x}{\sqrt{b}} \|_{L^{2}([0,1] \times [0,t])}^{2}+\| \partial_{x}{\sqrt{c}} \|_{L^{2}([0,1] \times [0,t])}^{2} 
\\& \leq \frac{E(a_0,b_0,c_0)+3}{4d} := \nolinebreak[4] C_1 
\end{split} 
\end{equation*} 
for any $t\geq0$, where $d := \min\{d_a,d_b,d_c\}$.
Finally, we take into account \eqref{conservation-laws} and \eqref{L-infinity-estimate} to estimate
\begin{equation*} 
\begin{split}
& \int_{\tau}^{\tau+1}\int_{0}^{1} {a^2}dxdt \leq 
\int_{\tau}^{\tau+1} {\| a(t) \|_{L^{\infty}[0,1]}} \bigg(\int_{0}^{1} {a(x,t)} dx\bigg)dt  
\\& \leq 2M_{1}\int_{\tau}^{\tau+1}\bigg(\int_{0}^{1}{a(x,t)}dx+\int_{0}^{1}|\partial_{x} \sqrt{a(x,t)}|^2dx\bigg)dt
\\&
\leq 2M_{1}\int_{\tau}^{\tau+1}\bigg(M_1+\int_{0}^{1}|\partial_{x} \sqrt{a(x,t)}|^2dx\bigg)dt 
\leq 2{M_1}^2+2M_1C_1=: C
\end{split} 
\end{equation*}

Similar inequalities hold for $b$ and $c$, therefore we have finished the proof. \end{proof}

\subsection{Uniform $L^{\infty}$ estimate}

In this section we shall prove that a classical solution to \eqref{3x3-system} is bounded uniformly in time (and therefore, it also exists for all time). To achieve this, our goal is to place ourselves in the setting of Theorem 4.1 \cite{FMS92}. We shall refrain from transcribing the assumptions (H1)--(H3) from \cite{FMS92} here, as they are universally satisfied by CRDN systems with nonnegative and essentially bounded initial conditions. On the other hand, assumption  (H4') is both specific to our case and nontrivial to verify. We state it below, as it refers to a general semilinear parabolic $m\times m$ system
\begin{equation}\label{gen-syst}
u_{i,t}-\kappa_i\Delta u_i=f_i(x,t,u),\ i=1,...,m,
\end{equation}
where $x\in\Omega$, $t>0$, $u=(u_1,...,u_m)$. It reads: 
$$\mbox{ There exist }K_1,\ K_2>0,\ 1\leq p<\infty,\ 1\leq r<1+\bigg[1-\frac{d}{p(d+2)}\bigg]\frac{2p}{d+2}\mbox{ such that }$$
\begin{equation}\label{H4} 
\mbox{ for each }1\leq j\leq m\mbox{ there exist }\alpha_{j,k},\ 1\leq j\leq k\mbox{ with }\alpha_{j,j}=1\mbox{ such that }
\end{equation}
$$\sum_{k=1}^j\alpha_{j,k}f_j(x,t,v)\leq K_1|v|^r+K_2\mbox{ for all }v\mbox{ in the positive orthant of }\mathbb{R}^m.$$
The following result is a version of Theorem 4.1 \cite{FMS92}. 
\begin{theorem}\label{thm-FMS92}
Suppose the initial data $u_{j,0}\in L^\infty(\Omega)$, $j=1,...,m$, the generic assumptions (H1)--(H3) from \cite{FMS92} hold. Further assume \eqref{H4} holds for some $1\leq p<\infty$ and 
\begin{equation}\label{cylinder-bound}
\|u\|_{L^p(\Omega\times(\tau,\tau+1);\mathbb{R}^m)}\leq M<\infty\mbox{ for all }\tau\geq0.
\end{equation}
Then 
\begin{equation}\label{unif-bound}
u\in L^\infty(\Omega\times[0,\infty);\mathbb{R}^m).
\end{equation}
\end{theorem}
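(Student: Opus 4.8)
The plan is to establish \eqref{unif-bound} by a finite bootstrap on the integrability exponent of $u$, carried out on the family of translated unit space--time cylinders $Q_\tau:=\Omega\times(\tau,\tau+1)$, $\tau\ge0$, and engineered so that every constant is independent of $\tau$. The engine is the smoothing estimate for the scalar Neumann heat equation: if $v_t-\kappa\Delta v=g$ on a cylinder with $g\in L^s$ in space--time, then $v$ lies in $L^q$ with $\frac1q=\frac1s-\frac{2}{d+2}$ whenever the right-hand side is positive, and $v\in L^\infty$ once $s>\frac{d+2}{2}$; here $d+2$ is the parabolic (space--time) dimension. Feeding a forcing of size $|u|^r$ into this estimate upgrades a uniform $L^{p_{\mathrm{old}}}$ control of $u$ into a uniform $L^{p_{\mathrm{new}}}$ control with $\frac{1}{p_{\mathrm{new}}}=\frac{r}{p_{\mathrm{old}}}-\frac{2}{d+2}$. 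The subcritical hypothesis $r<1+\big[1-\frac{d}{p(d+2)}\big]\frac{2p}{d+2}$ from \eqref{H4} is precisely the condition guaranteeing $p_{\mathrm{new}}>p_{\mathrm{old}}$ with a gain bounded below, so that after finitely many iterations, seeded by the assumed $L^p$ bound \eqref{cylinder-bound}, the exponent crosses the threshold $\frac{d+2}{2}$ and \eqref{unif-bound} follows.

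The point where the structural condition enters is that the individual $f_j$ in \eqref{gen-syst} need not be bounded above by $K_1|u|^r+K_2$ --- only the triangular combinations are --- and the diffusion constants $\kappa_j$ differ, so one cannot simply add equations. I would therefore process the components in the triangular order $j=1,\dots,m$, maintaining as an inductive hypothesis a uniform-in-$\tau$ bound on $u_1,\dots,u_{j-1}$ in a high $L^q$ (ultimately $L^\infty$). At stage $j$, writing $f_j=\sum_{k\le j}\alpha_{j,k}f_k-\sum_{k<j}\alpha_{j,k}f_k$ and using \eqref{H4} for the first sum, together with quasi-positivity of the kinetics (part of (H1)--(H3), which bounds each $f_k$ below by a multiple of $u_k$ times a polynomial) and the inductive control of $u_1,\dots,u_{j-1}$ for the second sum, I obtain a scalar differential inequality $u_{j,t}-\kappa_j\Delta u_j\le K_1|u|^r+(\text{terms controlled uniformly in }\tau)$. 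Since $u_j\ge0$ and the Neumann heat semigroup is positivity preserving, comparison dominates $u_j$ by the solution of the corresponding scalar equation, to which the smoothing estimate of the first paragraph applies, closing one bootstrap step for $u_j$.

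To keep all constants independent of $\tau$, I would run each scalar estimate on a slightly enlarged window $\Omega\times(\tau-\tfrac12,\tau+1)$ after multiplying by a fixed time cut-off $\chi$ equal to $1$ on $(\tau,\tau+1)$ and vanishing near $t=\tau-\tfrac12$; then $\chi u_j$ solves a heat equation with zero initial data and forcing $\chi g+\chi' u_j$, the commutator term $\chi' u_j$ being absorbed by the previous step's bound on the lower window, while the smoothing constant is translation invariant in $t$ (for $\tau$ near $0$ the $L^\infty$ initial data $u_{j,0}$ plays this role directly). Iterating the previous two paragraphs finitely many times then yields \eqref{unif-bound}. I expect the exponent bookkeeping --- verifying the strict gain at each step and that termination occurs in finitely many steps matching the stated threshold --- to be routine. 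The genuine obstacle is the orchestration of the second step: extracting a componentwise upper bound on each $u_j$ from only the intermediate-sum bounds \eqref{H4} while the diffusion constants $\kappa_j$ are unequal, which forces the careful inductive use of quasi-positivity and of the already-bounded components, all while preserving $\tau$-uniformity of every constant through the finite chain of comparisons.
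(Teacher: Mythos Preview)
The paper does not actually prove this theorem: it is quoted as Theorem~4.1 of \cite{FMS92}, and the paper's contribution is only to argue that the proof there (written for $\Omega=\mathbb{R}^d$) transfers to a bounded domain with Neumann data, by checking that the Gaussian upper bound on the Neumann heat kernel and the adjoint-equation estimate used in Lemmas~2.1--2.2 of \cite{FMS92} remain valid (citing \cite{ChoulliKayser} and \cite{LadSolUral}). So there is nothing in the paper to compare your argument against beyond that remark; your proposal instead attempts to reconstruct the \cite{FMS92} proof itself. The bootstrap framework you describe (parabolic $L^s\to L^q$ smoothing on unit cylinders, time cut-off to kill the initial datum and keep constants $\tau$-uniform, and the observation that the exponent condition on $r$ is exactly the strict-gain condition for the iteration) is correct and is indeed the skeleton of \cite{FMS92}.

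The gap is in your inductive step. Writing $f_j=\sum_{k\le j}\alpha_{j,k}f_k-\sum_{k<j}\alpha_{j,k}f_k$ and controlling the second sum by quasi-positivity gives, at best, $-\alpha_{j,k}f_k\le |\alpha_{j,k}|\,u_k\,P_k(u)$ where $P_k$ has degree one less than the full degree of $f_k$, \emph{not} degree $r-1$. Even with $u_1,\dots,u_{j-1}\in L^\infty$, the bound you obtain on $f_j$ is $K_1|u|^r+K_2+C\sum_{k<j}P_k(u)$, and the last terms may grow like $|u|^D$ with $D\gg r$, destroying the subcritical gain. For a concrete failure: take $m=2$, $f_1=-u_1u_2^2$, $f_2=u_1u_2^2$; both intermediate sums are $\le 0$, so \eqref{H4} holds with $r=1$ and any $p\ge1$, yet your decomposition yields only $f_2\le Cu_2^2$ after bounding $u_1$, forcing an effective exponent $2$ in the bootstrap, which does not gain when $p\le (d+2)/2$. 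This is exactly why \cite{FMS92} does not proceed by scalar comparison: the triangular structure with unequal diffusivities is handled by a duality argument, testing against solutions of an adjoint parabolic problem, which converts the $L^{p/r}$ control on the intermediate sums $\sum_{k\le j}\alpha_{j,k}f_k$ directly into improved integrability of $u_j$ without ever needing a pointwise upper bound on the individual $f_k$. You correctly identify this step as the ``genuine obstacle,'' but the mechanism you propose for it does not work in general.
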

A word of caution is in order: in \cite{FMS92} the analysis is performed on the whole space $\Omega=\mathbb{R}^d$ and for more general elliptic operators (instead of the Laplacian). However, we  argue that Theorem \ref{thm-FMS92} holds for bounded domains $\Omega$ as well. Indeed, our elliptic operator is the Laplacian and on a bounded domain $\Omega\subset\mathbb{R}^d$ we have that the estimate (used to prove Lemma 2.1 in \cite{FMS92}) on the fundamental solution to the corresponding parabolic operator 
$$0<G(x,\xi,t,\tau)\leq c_1(T)(t-\tau)^{-d/2}\exp\bigg\{-\frac{|x-\xi|^2}{c_2(T)(t-\tau)}\bigg\}$$
 for $x,\ \xi\in\Omega,\ 0<\tau<t\leq T<\infty$
(where $c_1(T),\ c_2(T)$ are bounded for finite $T$) holds for the Heat Kernel with Neumann BC on a bounded domain as well (see, e.g, \cite{ChoulliKayser}) with $c_1(t)=ce^t$, and $c_2(t)=C$, for some constants $c,\ C>0$. Likewise, for the homogeneous problem with initial value at $T\geq0$, we also have the estimate 
$$0<G_T(x,\xi,t)\leq c_1(t-T)(t-T)^{-d/2}\exp\bigg\{-\frac{|x-\xi|^2}{c_2(t-T)(t-T)}\bigg\},$$
which is used to prove Lemma 4.1 \cite{FMS92}. In the proof of Lemma 2.2 \cite{FMS92} there is an estimate on solutions of the adjoint equation; \cite{LadSolUral} is given as a reference. We limit ourselves to noting that \cite{LadSolUral} covers the case of and provides the same estimate on bounded domains as well. These are the estimates one needs to check in order to convince oneself that Theorem \ref{thm-FMS92} holds on a bounded domain $\Omega\subset{R}^d$. 

If we go back to \eqref{3x3-system} and denote by 
$$u:=(a,b,c),\ f:=(-ab^2+bc,-ab^2+bc,ab^2-bc)$$
we see that \eqref{H4} is satisfied with $\alpha_{1,1}=1, \alpha_{2,1}=-1,\ \alpha_{2,2}=1,\ \alpha_{3,1}=0,\ \alpha_{3,2}=-1,\ \alpha_{3,3}=1$, $r=2$, $K_1=k_2/2$, $K_2=0$. Thus, if we can find $1\leq p<\infty$ such that \eqref{cylinder-bound} and the first inequality in \eqref{H4} (the one bounding $r$ in terms of $p$) are satisfied, we can apply Theorem \ref{thm-FMS92} in order to obtain the uniform $L^\infty$ bound. But Proposition \ref{Local-bd} shows that $p=2$ does the job. 

The same reference \cite{FMS92} guarantees that the solution is classical, unique and nonnegative.
Therefore, we have proved:
\begin{theorem}\label{Linfty-thm}
If $a_0,\ b_0,\ c_0\in L^\infty(\Omega)$ and are a.e. nonnegative, then a unique classical solution to the system \eqref{3x3-system} exists for all time. Furthermore, the solution is uniformly (with respect to time) bounded in $L^\infty(\Omega)$.
\end{theorem}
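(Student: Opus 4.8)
The plan is to deduce Theorem~\ref{Linfty-thm} as a direct application of Theorem~\ref{thm-FMS92} (the $L^p$-to-$L^\infty$ regularity result of \cite{FMS92}, here ported to a bounded domain with Neumann data as justified above), with global existence obtained as a byproduct of the resulting uniform bound. First I would record the local theory: writing $u=(a,b,c)$, $\kappa_i=d_i$ and $f=(-ab^2+bc,\,-ab^2+bc,\,ab^2-bc)$, the system \eqref{3x3-system} has the form \eqref{gen-syst} with $m=3$, and its nonlinearity is locally Lipschitz and quasipositive (if $v\geq0$ and $v_i=0$ then $f_i(v)\geq0$: indeed $f_1=bc\geq0$ when $a=0$, $f_2=0$ when $b=0$, and $f_3=ab^2\geq0$ when $c=0$). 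Standard semilinear parabolic theory then yields a unique nonnegative classical solution on a maximal interval $[0,T_{\max})$, with $T_{\max}=+\infty$ unless $\|u(t)\|_{L^\infty}$ blows up as $t\uparrow T_{\max}$. Thus both global existence and the quantitative assertion reduce to the single uniform-in-time bound \eqref{unif-bound}.

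Next I would verify the structural hypothesis \eqref{H4}. The only supercritical term is the cubic rate $ab^2$, which enters $f_1=f_2=-ab^2+bc$ with a harmless negative sign and $f_3=ab^2-bc$ with a dangerous positive sign. The decisive point is the mass-cancellation encoded in the lower-triangular combination with unit diagonal $\alpha_{1,1}=1$, $\alpha_{2,1}=-1$, $\alpha_{2,2}=1$, $\alpha_{3,2}=1$, $\alpha_{3,3}=1$ (remaining entries zero): the first row gives $f_1=-ab^2+bc\leq bc\leq\tfrac12(b^2+c^2)\leq\tfrac12|v|^2$; the second gives $f_2-f_1\equiv0$; and the third gives $f_2+f_3\equiv0$, so the positive cubic term in $f_3$ is exactly annihilated. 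Hence \eqref{H4} holds with $r=2$, $K_1=\tfrac12$ and $K_2=0$.

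It then remains to meet the two quantitative requirements of Theorem~\ref{thm-FMS92}. For the exponent constraint I take $p=2$: in dimension $d=1$ the right-hand bound in \eqref{H4} equals $1+\big[1-\tfrac{1}{6}\big]\tfrac{4}{3}=\tfrac{19}{9}$, so indeed $r=2<\tfrac{19}{9}$. This is precisely where $d=1$ is unavoidable, since already for $d=2$ the same bound at $p=2$ drops to $\tfrac{7}{4}<2$, making $r=2$ supercritical. For the cylinder estimate \eqref{cylinder-bound} I invoke Proposition~\ref{Local-bd}, which furnishes $\|u\|_{L^2(\Omega\times(\tau,\tau+1))}\leq C$ with $C$ independent of $\tau$, i.e. exactly \eqref{cylinder-bound} with $p=2$. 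Theorem~\ref{thm-FMS92} then yields $u\in L^\infty(\Omega\times[0,\infty))$, which rules out blow-up and forces $T_{\max}=+\infty$; uniqueness, classicality and nonnegativity are supplied by the same reference.

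Finally, the main obstacle: essentially all the difficulty sits upstream of this assembly, in the tension between the supercritical exponent $r=2$ and the integrability $p=2$ coming from the entropy dissipation. The admissibility inequality in \eqref{H4} holds only barely, and only because $d=1$. Consequently the genuine work is (i) the uniform-in-$\tau$ local $L^2$ bound of Proposition~\ref{Local-bd}, whose proof rests on the strong one-dimensional Sobolev/Poincar\'e estimate \eqref{L-infinity-estimate} together with the dissipation of the entropy \eqref{entropy-dissipation}, and (ii) the verification that the Gaussian heat-kernel bounds underpinning Theorem~\ref{thm-FMS92} persist for the Neumann Laplacian on a bounded interval. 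Given these, the present theorem amounts to checking that \eqref{H4} and \eqref{cylinder-bound} hold with $p=2$, rather than to proving a new estimate.
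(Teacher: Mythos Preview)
Your proposal is correct and follows essentially the same route as the paper: verify the structural hypothesis \eqref{H4} via a lower-triangular combination that cancels the cubic rate, feed in the uniform $L^2$ cylinder bound from Proposition~\ref{Local-bd} with $p=2$ (checking the exponent constraint holds since $d=1$), and invoke Theorem~\ref{thm-FMS92}. Your choice $\alpha_{3,2}=1$, yielding $f_2+f_3\equiv0$, is the right one; the paper's $\alpha_{3,2}=-1$ is evidently a sign slip, and your more explicit discussion of quasipositivity, the blow-up alternative, and the numerical check $r=2<19/9$ only adds clarity.
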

\subsection{$L^1$ convergence}

Let us make the assumption $\beta = \|\frac{1}{b_0}\|_{L^{\infty}[0,1]} < \infty$; because the classical solution is continuous, there exists $t_1 > 0$ such that $\|\frac{1}{b(\cdot,t)}\|_{L^{\infty}[0,1]} < 10\beta$ for all $t \in [0,t_1]$. We next divide the second equation in \eqref{3x3-system} by $-b^2$ and use the uniform (in time) $L^{\infty}$ boundedness of $a$ to get
$$
\partial_t \bigg(\frac{1}{b}\bigg)-d_b \Delta \bigg(\frac{1}{b}\bigg)= \frac{ab^2}{b^2}-\frac{bc}{b^2}-2d_b\frac{|\nabla{b}|^2}{b^3} \leq a \leq k.
$$

Using the maximum principle for the heat equation, we have that, for all $t \in [0,t_1]$,
$$
\bigg\|\frac{1}{b(\cdot,t)}\bigg\|_{L^{\infty}[0,1]} \leq \bigg\|\frac{1}{b_0}\bigg\|_{L^{\infty}[0,1]} + kt = \beta +kt.
$$

We can iterate this inequality in time to get
\begin{equation}\label{tilde-b}
\tilde b(t):=\inf_{x \in [0,1]}{b(x,t)} \geq (\beta +kt)^{-1}
\end{equation}
for all $t>0$.
Therefore, we now have an estimate on how fast $b$ can decay to zero.

There exists a unique equilibrium with all positive components for \eqref{3x3-system} and by \eqref{3x3-system} and \eqref{conservation-laws} we see that it is given by $v_\infty:=(a_\infty,b_\infty,c_\infty)$, where its components are uniquely determined by
\begin{equation}\label{positive-equilibrium}
a_\infty b_\infty=c_\infty,\ a_\infty+c_\infty=M_1,\ b_\infty+c_\infty=M_2.
\end{equation}
Now we introduce the {\it relative entropy} 
\begin{equation}\label{relative-entropy} 
\begin{split}
& E(a,b,c|a_{\infty},b_{\infty},c_{\infty}) =  \int\limits_{[0,1]}\bigg({a\ln{\frac{a}{a_{\infty}}}}-a+a_{\infty}\bigg)dx+\int\limits_{[0,1]}\bigg({b\ln{\frac{b}{b_{\infty}}}}-b+b_{\infty}\bigg)dx
\\& +\int\limits_{[0,1]}\bigg({c\ln{\frac{c}{c_{\infty}}}}-c+c_{\infty}\bigg)dx
\end{split} 
\end{equation}
and its corresponding {\it entropy dissipation}
\begin{equation}\label{rel-entr-dissip} 
\begin{split}
& D(a,b,c|a_{\infty},b_{\infty},c_{\infty}) = d_a\int\limits_{[0,1]}{\frac{|\nabla{a}|^2}{a}}dx+d_b\int\limits_{[0,1]}{\frac{|\nabla{b}|^2}{b}}dx+d_c\int\limits_{[0,1]}{\frac{|\nabla{c}|^2}{c}}dx
\\& + a_{\infty}b_{\infty}^2\int\limits_{[0,1]}{\Psi\bigg(\frac{ab^2}{a_{\infty}b_{\infty}^2} ; \frac{bc}{b_{\infty}c_{\infty}}\bigg)}dx + b_{\infty}c_{\infty}\int\limits_{[0,1]}{\Psi\bigg( \frac{bc}{b_{\infty}c_{\infty}} ; \frac{ab^2}{a_{\infty}b_{\infty}^2}\bigg)}dx,
\end{split} 
\end{equation}
 where 
 \begin{equation}\label{Psi}
 \Psi(x;y)=x\ln\bigg(\frac{x}{y}\bigg)-x+y.
 \end{equation}
At this point we introduce the notation 
$$\bar f:=\int\limits_{[0,1]}f(x)dx\mbox{ for all essentially non-negative }f\in L^1(0,1).$$
On the basis of the following identity
$$
\int\limits_{[0,1]}\bigg({a\ln{\frac{a}{a_{\infty}}}}-a+a_{\infty}\bigg)dx = \int\limits_{[0,1]}\bigg({a\ln{\frac{a}{\overline{a}}}-a+\overline{a}}\bigg)dx+\int\limits_{[0,1]}\bigg({\overline{a}\ln{\frac{\overline{a}}{a_{\infty}}}}-\overline{a}+a_{\infty}\bigg)dx,
$$ 
we get 
\begin{equation}\label{rel-entropy-split}
E(a,b,c|a_{\infty},b_{\infty},c_{\infty}) = E(a,b,c|\overline{a},\overline{b},\overline{c}) + E(\overline{a},\overline{b},\overline{c}|a_{\infty},b_{\infty},c_{\infty}).
\end{equation}
The Logarithmic Sobolev Inequality
\begin{equation}\label{LSI}
\int\limits_{[0,1]}{\frac{|\nabla{f}|^2}{f}}dx \geq 
C_{LSI} \int\limits_{[0,1]}{f \ln{\frac{f}{\overline{f}}}}dx,
\end{equation}
(where $C_{LSI}$ only depends on the domain $[0,1]$) yields
\begin{equation}\label{LSI-1}
d_a\int\limits_{[0,1]}{\frac{|\nabla{a}|^2}{a}}dx+d_b\int\limits_{[0,1]}{\frac{|\nabla{b}|^2}{b}}dx+d_c\int\limits_{[0,1]}{\frac{|\nabla{c}|^2}{c}}dx \geq 
C_2  E(a,b,c|\overline{a},\overline{b},\overline{c})
\end{equation}
for an explicit constant $C_2=\min\{d_a,d_b,d_c\} \cdot C_{LSI}$. Next, we define two integrand functions:
$$
S_1(a,b,c) := \bigg({a\ln{\frac{a}{a_{\infty}}}}-a+a_{\infty}\bigg)+\bigg({b\ln{\frac{b}{b_{\infty}}}}-b+b_{\infty}\bigg)+\bigg({c\ln{\frac{c}{c_{\infty}}}}-c+c_{\infty}\bigg),
$$ 
$$
S_2(a,b,c):=\Psi\bigg(\frac{ab}{a_{\infty}b_{\infty}} ; \frac{c}{c_{\infty}}\bigg)+\Psi\bigg( \frac{c}{c_{\infty}} ; \frac{ab}{a_{\infty}b_{\infty}}\bigg)
$$ 
and set 
$S(a,b,c):=S_1(a,b,c)+S_2(a,b,c).$
From \eqref{rel-entr-dissip}, \eqref{rel-entropy-split}, \eqref{LSI-1} and \eqref{tilde-b} we get
\begin{equation}\label{big-ineq} 
\begin{split}
& D(a,b,c|a_{\infty},b_{\infty},c_{\infty}) \geq 
C_2 E(a,b,c|\overline{a},\overline{b},\overline{c})
\\& + a_{\infty}b_{\infty}^2\int\limits_{[0,1]}{\Psi{\bigg(\frac{ab^2}{a_{\infty}b_{\infty}^2} ; \frac{bc}{b_{\infty}c_{\infty}}\bigg)}}dx + b_{\infty}c_{\infty}\int\limits_{[0,1]}{\Psi{\bigg( \frac{bc}{b_{\infty}c_{\infty}} ; \frac{ab^2}{a_{\infty}b_{\infty}^2}\bigg)}}dx
\\& = C_2 \big[E(a,b,c|a_{\infty},b_{\infty},c_{\infty})-E(\overline{a},\overline{b},\overline{c}|a_{\infty},b_{\infty},c_{\infty})\big]
\\& +a_{\infty}b_{\infty}\tilde b(t)\int\limits_{[0,1]}{\Psi{\bigg(\frac{ab}{a_{\infty}b_{\infty}} ; \frac{c}{c_{\infty}}\bigg)}}dx 
+ c_{\infty}\tilde b(t)\int\limits_{[0,1]}{\Psi{\bigg( \frac{c}{c_{\infty}} ; \frac{ab}{a_{\infty}b_{\infty}}\bigg)}}dx
\\& \geq C_2 \int\limits_{[0,1]}{S(a,b,c)}dx-C_2 \int\limits_{[0,1]}{S(\overline{a},\overline{b},\overline{c})}dx
\\& +(\beta +kt)^{-1}\bigg[a_{\infty}b_{\infty}^2\int\limits_{[0,1]}{\Psi{\bigg(\frac{ab}{a_{\infty}b_{\infty}} ; \frac{c}{c_{\infty}}\bigg)}}dx 
+ b_{\infty}c_{\infty}\int\limits_{[0,1]}{\Psi{\bigg( \frac{c}{c_{\infty}} ; \frac{ab}{a_{\infty}b_{\infty}}\bigg)}}dx\bigg]
\\& \geq C_3(t) \bigg\{\int\limits_{[0,1]}{\big[S_1(a,b,c)+S_2(a,b,c)\big]}dx-S_1(\overline{a},\overline{b},\overline{c})\bigg\}
\\& \geq C_3(t)\big[{\hat{S}(\overline{a},\overline{b},\overline{c})}-S_1(\overline{a},\overline{b},\overline{c})\big],
\end{split} 
\end{equation}
where 
$$C_3(t):=(\beta +kt)^{-1} \min\{\beta C_2,a_{\infty}b_{\infty},c_{\infty}\}$$ and $\hat{S}$ is the {\it convexification} of $S$, i.e. the supremum of all affine functions below $S$. The last inequality above holds due to Jensen's inequality and the unit volume of the spatial domain.

We next define the compatible class:
\begin{equation*}
\begin{split}
   & C_{M_1,M_2}:= \big\{v=(x,y,z) \in {\mathbb{R}}_{\geq 0}^3: x+z=M_1,\  y+z=M_2,\\&
 E(x,y,z|a_{\infty},b_{\infty},c_{\infty}) 
    \leq E(a_0,b_0,c_0|a_{\infty},b_{\infty},c_{\infty})\big\}.
\end{split}
\end{equation*}
In this class, the first two conditions are related to the conservation laws \eqref{conservation-laws} while the last one follows from the decreasing relative entropy. Since we know $\hat{S}=\widehat{S_1+S_2} \geq \hat{S_1}+\hat{S_2}$, $S_1$ is convex and $S_2$ is non-negative, we have
\begin{equation}\label{S2-hat}
({\hat{S}}-S_1)(v) \geq (\hat{S_1}+\hat{S_2}-S_1)(v) \geq \hat{S_2}(v) \geq 0.
\end{equation}
Furthermore, it is not hard to verify that  
\begin{equation}\label{S2-zero}
v \in C_{M_1,M_2} \And S_2(v)=0 \mbox{ if and only if } 
v = (a_{\infty},b_{\infty},c_{\infty}).
\end{equation}
It follows 
$$\hat{S_2}(v) = 0\mbox{ if and only if }v = (a_{\infty},b_{\infty},c_{\infty}).$$ 
Let 
$$
C_4 := \inf_{v \in C_{M_1,M_2}}\frac{({\hat{S}}-S_1)(v)}{E(v|a_{\infty},b_{\infty},c_{\infty})}.
$$
By \eqref{S2-hat} and \eqref{S2-zero} we get $C_4$ can only be zero if there exists a sequence $\{v_n\}_n \subset C_{M_1,M_2}$ such that $v_n \rightarrow (a_{\infty},b_{\infty},c_{\infty})$ as $n\rightarrow\infty$. 
This means
$$
\liminf_{v \in C_{M_1,M_2},v \rightarrow v_\infty}{\frac{({\hat{S}}-S_1)(v)}{E(v|a_{\infty},b_{\infty},c_{\infty})}} > 0 \ \mbox{ implies } C_4 > 0.
$$

In order to show that the above limit inferior is positive we use the following lemma \cite{MHM}:
\begin{lemma}
  There exists $\delta > 0$ such that for all $ v \in B(v_\infty,\delta)$ (ball centered at $v_\infty$ and of radius $delta$) $S(v)$ is locally convex in this ball. 
 \end{lemma}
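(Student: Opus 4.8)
The plan is to reduce the statement to a single pointwise computation: I will show that the Hessian $\nabla^2 S(v_\infty)$ is positive definite, and then let continuity do the rest. Since all components of $v_\infty=(a_\infty,b_\infty,c_\infty)$ are strictly positive, for $\delta$ small enough the ball $B(v_\infty,\delta)$ stays inside the open positive orthant, where each building block of $S=S_1+S_2$ is smooth; hence $S$ is $C^2$ near $v_\infty$ and $\nabla^2 S$ depends continuously on $v$. Once $\nabla^2 S(v_\infty)\succ0$ is established, continuity of the Hessian gives $\nabla^2 S(v)\succ0$ for all $v\in B(v_\infty,\delta)$ after shrinking $\delta$, and since the ball is convex this is exactly the (strict) convexity of $S$ on $B(v_\infty,\delta)$ that the lemma asserts.

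First I would handle $S_1$. Each of its three summands is a one-variable relative entropy $f\mapsto f\ln(f/f_\infty)-f+f_\infty$ whose second derivative is $1/f$, so $S_1$ has the diagonal Hessian $\nabla^2 S_1=\mathrm{diag}(1/a,1/b,1/c)$, which is positive definite throughout the positive orthant and equals $\mathrm{diag}(1/a_\infty,1/b_\infty,1/c_\infty)$ at $v_\infty$. Next I would analyze $S_2$. Using the identity $\Psi(x;y)+\Psi(y;x)=(x-y)\ln(x/y)$, one may write $S_2=\psi(u,w)$ with $\psi(x,y):=(x-y)\ln(x/y)$, $u:=ab/(a_\infty b_\infty)$, and $w:=c/c_\infty$. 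Because the equilibrium satisfies $a_\infty b_\infty=c_\infty$, at $v_\infty$ we have $u=w=1$, where $\psi\ge0$ attains its minimum and $\nabla\psi(1,1)=0$; consequently $\nabla S_2(v_\infty)=0$. A direct computation gives $\nabla^2\psi(1,1)=\begin{pmatrix}2&-2\\-2&2\end{pmatrix}\succeq0$, and since the first-order factor vanishes the chain-rule correction term drops out, leaving $\nabla^2 S_2(v_\infty)=J^{\top}\,\nabla^2\psi(1,1)\,J\succeq0$, where $J=\begin{pmatrix}1/a_\infty&1/b_\infty&0\\0&0&1/c_\infty\end{pmatrix}$ is the Jacobian of $(u,w)$ with respect to $(a,b,c)$ at $v_\infty$. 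Adding the two pieces, $\nabla^2 S(v_\infty)$ is the sum of a positive-definite diagonal matrix and a positive-semidefinite matrix, hence positive definite, which completes the reduction.

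The main obstacle, and the structural point worth isolating, is that $S_2$ is \emph{not} convex near $v_\infty$ on its own: it depends on $(a,b,c)$ only through the two combinations $ab$ and $c$, so $\nabla^2 S_2(v_\infty)$ is genuinely degenerate, its null space containing both the one-dimensional kernel of the $2\times3$ matrix $J$ and the null direction $(1,1)$ of $\nabla^2\psi(1,1)$. Thus local convexity of $S$ can never be read off from the reaction term $S_2$ alone; the crux is that the strictly positive diagonal Hessian coming from the Boltzmann entropy part $S_1$ fills in precisely those degenerate directions, so that the two contributions — which, crucially, share the same critical point $v_\infty$ because $a_\infty b_\infty=c_\infty$ forces $u=w=1$ and hence $\nabla S_2(v_\infty)=0$ — add to a positive-definite form. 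I expect the only delicate bookkeeping to be this verification that the gradients of $S_1$ and $S_2$ both vanish at $v_\infty$ (so the Hessians genuinely add) together with confirming the sign structure of $\nabla^2\psi(1,1)$; everything else is a routine continuity argument.
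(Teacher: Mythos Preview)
Your argument is correct. The paper does not actually prove this lemma; it simply invokes it as a result from \cite{MHM}. Your Hessian computation---showing $\nabla^2 S_1(v_\infty)=\mathrm{diag}(1/a_\infty,1/b_\infty,1/c_\infty)\succ0$, writing $S_2=\psi(u,w)$ with $\psi(x,y)=(x-y)\ln(x/y)$, checking $\nabla\psi(1,1)=0$ so that the chain rule yields $\nabla^2 S_2(v_\infty)=J^\top\nabla^2\psi(1,1)J\succeq0$, and then appealing to continuity---is a clean, self-contained proof of exactly what the paper imports by citation.

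One small wording issue: in your final paragraph you write that the gradients of $S_1$ and $S_2$ must vanish at $v_\infty$ ``so the Hessians genuinely add.'' The Hessian of a sum is always the sum of the Hessians; no gradient condition is needed for that. What the vanishing of $\nabla\psi(1,1)$ buys you (and you use this correctly earlier) is that the chain-rule expression for $\nabla^2 S_2(v_\infty)$ reduces to $J^\top\nabla^2\psi(1,1)J$ without the additional term $\sum_i(\partial_i\psi)\nabla^2(u_i)$. That is the real reason the equilibrium identity $a_\infty b_\infty=c_\infty$ matters here: it forces $(u,w)=(1,1)$, where $\psi$ is minimized, so this extra term---which would otherwise involve the nonzero Hessian of $u=ab/(a_\infty b_\infty)$ and could spoil semidefiniteness---disappears.
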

In particular, we get that $\hat{S} \equiv S$ in the ball centered at $(a_{\infty},b_{\infty},c_{\infty})$ with radius $\delta$.
Let us now define 
\begin{equation}\nonumber
 D_2(v):= a_{\infty}b_{\infty}^2 \Psi\bigg(\frac{ab^2}{a_{\infty}b_{\infty}^2};\frac{bc}{b_{\infty}c_{\infty}}\bigg) + b_{\infty}c_{\infty}\Psi\bigg( \frac{bc}{b_{\infty}c_{\infty}} ; \frac{ab^2}{a_{\infty}b_{\infty}^2}\bigg) 
\end{equation}
and consider the Taylor expansion of 
$$
\frac{D_2(v)}{E(v|a_{\infty},b_{\infty},c_{\infty})}
$$
around the unique positive equilibrium $(a_{\infty},b_{\infty},c_{\infty})$. Since $a_{\infty}b_{\infty}=c_{\infty}$, we have $D_2(a_{\infty},b_{\infty},c_{\infty})=\nabla{D_2(a_{\infty},b_{\infty},c_{\infty})}=0$ and quadratic term in the expansion is 
$$
D_2(v)= 2\bigg[\frac{-(v_1-a_{\infty})}{a_{\infty}}+\frac{-(v_2-b_{\infty})}{b_{\infty}}+\frac{(v_3-c_{\infty})}{c_{\infty}}\bigg]^2.
$$
Thus, 
$$
\liminf_{v \in C_{M_1,M_2},v \rightarrow v_\infty}\frac{D_2(v)}{E(v|a_{\infty},b_{\infty},c_{\infty})} = \inf_{v \in C_{M_1,M_2}}{\frac{2\big[\frac{-(x-a_{\infty})}{a_{\infty}}+\frac{-(y-b_{\infty})}{b_{\infty}}+\frac{(z-c_{\infty})}{c_{\infty}}\big]^2}{\frac{(x-a_{\infty})^2}{a_{\infty}}+\frac{(y-b_{\infty})^2}{b_{\infty}}+\frac{(z-c_{\infty})^2}{c_{\infty}}}}
$$
Since $v \in C_{M_1,M_2}$ (which means $ x+z=a_{\infty}+c_{\infty}, y+z=b_{\infty}+c_{\infty}$), we get
\begin{equation*}
-(x-a_{\infty})=-(y-b_{\infty})=z-c_{\infty},
\end{equation*}
Then 
\begin{equation*}
\inf_{v \in C_{M_1,M_2}}{\frac{2\big[\frac{-(x-a_{\infty})}{a_{\infty}}+\frac{-(y-b_{\infty})}{b_{\infty}}+\frac{(z-c_{\infty})}{c_{\infty}}\big]^2}{\frac{(x-a_{\infty})^2}{a_{\infty}}+\frac{(y-b_{\infty})^2}{b_{\infty}}+\frac{(z-c_{\infty})^2}{c_{\infty}}}} = 2\bigg(\frac{1}{a_{\infty}}+\frac{1}{b_{\infty}}+\frac{1}{c_{\infty}}\bigg) > 0.
\end{equation*}
Also notice (by direct computation and using that $c_\infty=a_\infty b_\infty$) the identity $D_2(v) = bc_{\infty}S_2(v)$, which implies (in view of the above inequality)
$$
\liminf_{v \in C_{M_1,M_2},v \rightarrow v_\infty}{\frac{S_2(v)}{E(v|a_{\infty},b_{\infty},c_{\infty})}} > 0.
$$

Combining the above two steps, we have 
\begin{equation*}
    \begin{split}
         & \liminf_{v \in C_{M_1,M_2},v \rightarrow v_\infty}{\frac{({\hat{S}}-S_1)(v)}{E(v|a_{\infty},b_{\infty},c_{\infty})}} 
        \\& = \liminf_{v \in C_{M_1,M_2},v \rightarrow v_\infty}{\frac{S_2(v)}{E(v|a_{\infty},b_{\infty},c_{\infty})}} > 0.
    \end{split}
\end{equation*}

Therefore, in light of \eqref{big-ineq}, we obtain
$$
D(a,b,c|a_{\infty},b_{\infty},c_{\infty}) \geq C_3(t)C_4E(a,b,c|a_{\infty},b_{\infty},c_{\infty})
$$
so, 
$$
D(a,b,c|a_{\infty},b_{\infty},c_{\infty}) \geq C_5(\beta +kt)^{-1}E(a,b,c|a_{\infty},b_{\infty},c_{\infty}),
$$
where $C_5 = \min\{1,C_4\} \times \min\{\beta C_2,a_{\infty}b_{\infty}^2,b_{\infty}c_{\infty}\}$.
Then Gronwall's lemma yields
$$
  E(a,b,c|a_{\infty},b_{\infty},c_{\infty}) \leq E(a_0,b_0,c_0|a_{\infty},b_{\infty},c_{\infty})(\beta +kt)^{\frac{-C_5}{k}}
$$
for all $t>0$.

Now we need the following lemma \cite{AMTU}:

\begin{lemma}
For all non-negative and measurable functions $a,b,c: [0,1] \rightarrow \mathbb{R}$ and $\int_{0}^{1}(a+c)=M_1, \int_{0}^{1}(b+c)=M_2$. Then there exists a constant $C_K >0$ related only with domain and $M_1, M_2$ such that:
$$
E(a,b,c|a_{\infty},b_{\infty},c_{\infty}) \geq C_K (\|a-a_{\infty}\|_{1}^2+\|b-b_{\infty}\|_{1}^2+\|c-c_{\infty}\|_{1}^2)
$$
\end{lemma}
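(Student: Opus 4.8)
The plan is to establish a Csisz\'ar--Kullback--Pinsker (CKP) type inequality by exploiting the additive splitting of the relative entropy already recorded in \eqref{rel-entropy-split}, namely
$$E(a,b,c|a_{\infty},b_{\infty},c_{\infty})=E(a,b,c|\overline{a},\overline{b},\overline{c})+E(\overline{a},\overline{b},\overline{c}|a_{\infty},b_{\infty},c_{\infty}),$$
and to bound each of the two terms on the right from below by the appropriate squared $L^1$ distances. The first term measures the spatial oscillation of $(a,b,c)$ about its average, while the second is a purely finite-dimensional quantity depending only on the three numbers $\overline{a},\overline{b},\overline{c}$. Once both are controlled, the full estimate follows from the triangle inequality $\|f-f_{\infty}\|_1\le\|f-\overline{f}\|_1+|\overline{f}-f_{\infty}|$ (valid on the unit interval since $\|\overline{f}-f_{\infty}\|_1=|\overline{f}-f_{\infty}|$) together with $(\alpha+\beta)^2\le 2\alpha^2+2\beta^2$.

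For the oscillation term I would treat each species separately. Since $\int_0^1 f\,dx=\overline{f}$ on the unit domain, both $f/\overline{f}$ and the uniform density $1$ are probability densities, so the classical scalar Pinsker inequality gives
$$\int_0^1\Big(f\ln\tfrac{f}{\overline{f}}-f+\overline{f}\Big)dx=\int_0^1 f\ln\tfrac{f}{\overline{f}}\,dx\ge\frac{1}{2\overline{f}}\,\|f-\overline{f}\|_1^2.$$
Applying this to $f=a,b,c$ and using the conservation laws \eqref{conservation-laws} to bound $\overline{a}\le M_1$, $\overline{b}\le M_2$, $\overline{c}\le\min\{M_1,M_2\}$ from above yields $E(a,b,c|\overline{a},\overline{b},\overline{c})\ge C_1\big(\|a-\overline{a}\|_1^2+\|b-\overline{b}\|_1^2+\|c-\overline{c}\|_1^2\big)$ with $C_1=1/(2\max\{M_1,M_2\})$, a constant depending only on $M_1,M_2$ and the domain. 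Note there is no difficulty as an average tends to zero: the Pinsker constant $1/(2\overline{f})$ only increases, while $\|f-\overline{f}\|_1$ vanishes simultaneously.

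For the finite-dimensional term I would use the two conservation constraints $\overline{a}+\overline{c}=M_1$ and $\overline{b}+\overline{c}=M_2$ to reduce to a single scalar variable $\overline{c}\in[0,\min\{M_1,M_2\}]$, writing $\overline{a}=M_1-\overline{c}$ and $\overline{b}=M_2-\overline{c}$; along this segment $\overline{a}-a_{\infty}=\overline{b}-b_{\infty}=-(\overline{c}-c_{\infty})$, so all three differences are comparable to $|\overline{c}-c_{\infty}|$. The map $g(\overline{c}):=E(\overline{a},\overline{b},\overline{c}|a_{\infty},b_{\infty},c_{\infty})$ is continuous on the closed segment (the logarithmic terms extend continuously, with finite positive values at the endpoints because $c_{\infty}>0$), strictly convex in the interior with $g''=\tfrac{1}{\overline{a}}+\tfrac{1}{\overline{b}}+\tfrac{1}{\overline{c}}>0$, and by \eqref{positive-equilibrium} attains its unique zero at $\overline{c}=c_{\infty}$ with $g'(c_{\infty})=0$. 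Hence $g(\overline{c})/(\overline{c}-c_{\infty})^2$ extends to a continuous, strictly positive function on the compact segment (its value at $c_{\infty}$ being $\tfrac12 g''(c_{\infty})=\tfrac12(\tfrac{1}{a_{\infty}}+\tfrac{1}{b_{\infty}}+\tfrac{1}{c_{\infty}})$), so its infimum is positive and depends only on $M_1,M_2$. Absorbing the comparability constants gives $E(\overline{a},\overline{b},\overline{c}|a_{\infty},b_{\infty},c_{\infty})\ge C_2\big(|\overline{a}-a_{\infty}|^2+|\overline{b}-b_{\infty}|^2+|\overline{c}-c_{\infty}|^2\big)$.

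Combining the two lower bounds through the triangle inequality produces the claim with $C_K=\tfrac12\min\{C_1,C_2\}$. The main obstacle is the finite-dimensional step: one must verify that the constant stays bounded away from zero uniformly over the entire constraint segment, including near the boundary equilibria where one or more averages vanish and the entropy is least regular. The two facts that make this work are that $c_{\infty}>0$ keeps $g$ strictly positive at both endpoints, and that the reduction to the single variable $\overline{c}$ turns a degenerate-looking three-dimensional minimization (the Hessian of $E$ is only positive semidefinite transverse to the stoichiometric subspace) into a one-dimensional, manifestly nondegenerate one.
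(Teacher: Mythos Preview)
The paper does not supply its own proof of this lemma; it simply attributes the result to \cite{AMTU}, the standard reference for Csisz\'ar--Kullback--Pinsker type inequalities. Your argument is correct and essentially reconstructs the CKP approach tailored to this particular stoichiometric structure: the additive splitting \eqref{rel-entropy-split} followed by scalar Pinsker on each species handles the spatial oscillation term, and your reduction to the single variable $\overline{c}$ along the stoichiometric segment turns the finite-dimensional piece into a clean one-variable compactness argument.

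The only delicate point, which you correctly identified and addressed, is that the uniform positivity of $g(\overline{c})/(\overline{c}-c_\infty)^2$ on the closed segment $[0,\min\{M_1,M_2\}]$ requires $c_\infty$ to lie in the \emph{open} interior, so that the denominator does not vanish at the endpoints. This is guaranteed by \eqref{positive-equilibrium}, since $a_\infty,b_\infty,c_\infty>0$ forces $0<c_\infty<\min\{M_1,M_2\}$; and it is exactly the step that would fail if one attempted the same argument centered at a boundary equilibrium instead of the positive one. Your handling of the degenerate case $\overline{f}=0$ in the Pinsker step is also fine, since then $f\equiv 0$ a.e.\ and both sides of that inequality vanish.
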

Therefore, we get
$$
\|a-a_{\infty}\|_{1}^2+\|b-b_{\infty}\|_{1}^2+\|c-c_{\infty}\|_{1}^2 \leq
C_6(\beta +kt)^{\frac{-C_5}{k}},
$$
where $C_6 = \frac{E(a_0,b_0,c_0|a_{\infty},b_{\infty},c_{\infty})}{C_K}$.

The above inequality shows that the solution stays away from the boundary equilibrium; in fact, its converges to the unique positive equilibrium in the $L^1$ norm. In order to show that the convergence rate is, in fact, exponential, we use the above inequality to conclude that there exists a time $T_{\epsilon}$ such that $\|a(t)\|_{1}, \|b(t)\|_{1}, \|c(t)\|_{1}  > \epsilon^{2} > 0$ (for some sufficiently small $\epsilon < 1$) for all $t > T_{\epsilon}$.

\subsection{Entropy entropy-dissipation estimate}
By using the inequality \eqref{Psi-ineq}, we obtain
\begin{equation}\label{C7} 
\begin{split}
& D(a,b,c|a_{\infty},b_{\infty},c_{\infty}) = d_a\int\limits_{[0,1]}{\frac{|\nabla{a}|^2}{a}}dx+d_b\int\limits_{[0,1]}{\frac{|\nabla{b}|^2}{b}}dx+d_c\int\limits_{[0,1]}{\frac{|\nabla{c}|^2}{c}}dx
\\& + a_{\infty}b_{\infty}^2\int\limits_{[0,1]}{\Psi{\bigg(\frac{ab^2}{a_{\infty}b_{\infty}^2} ; \frac{bc}{b_{\infty}c_{\infty}}\bigg)}}dx + b_{\infty}c_{\infty}\int\limits_{[0,1]}{\Psi{\bigg( \frac{bc}{b_{\infty}c_{\infty}} ; \frac{ab^2}{a_{\infty}b_{\infty}^2}\bigg)}}dx
\\& \geq 4d_a\|\nabla{\sqrt{a}}\|_2^{2}+4d_b\|\nabla{\sqrt{b}}\|_2^{2}+4d_c\|\nabla{\sqrt{c}}\|_2^{2}
\\& + a_{\infty}b_{\infty}^2\bigg\|\sqrt{\frac{ab^2}{a_{\infty}b_{\infty}^2}}   - \sqrt{\frac{bc}{b_{\infty}c_{\infty}}}\bigg\|_2^{2} + b_{\infty}c_{\infty}\bigg\| \sqrt{\frac{bc}{b_{\infty}c_{\infty}}} - \sqrt{\frac{ab^2}{a_{\infty}b_{\infty}^2}}\bigg\|_2^{2}
\\& \geq C_7\bigg(\|\nabla{\sqrt{a}}\|_2^{2}+\|\nabla{\sqrt{b}}\|_2^{2}+\|\nabla{\sqrt{c}}\|_2^{2}+\bigg\|\sqrt{\frac{ab^2}{a_{\infty}b_{\infty}^2}}   - \sqrt{\frac{bc}{b_{\infty}c_{\infty}}}\bigg\|_2^{2}\bigg),
\end{split} 
\end{equation}
where $C_7 := \min(4d_a,4d_b,4d_c,a_{\infty}b_{\infty}^2+b_{\infty}c_{\infty})$. Due to \eqref{conservation-laws}, we have $M := \max(M_1,M_2)$ such that $\overline{a(t)},\overline{b(t)},\overline{c(t)} < M$ for all $t\geq0$. In what follows we drop the dependence on $t$ from the notation; each time we write $\overline{a}$ or the likes we mean the spatial average of $a(t)=a(\cdot,t)$.
Due to \eqref{psi-increase}, we see that
$$
 \Psi(x,y) \leq \frac{\Psi(M,y)}{(\sqrt{M}-\sqrt{y})^2}(\sqrt{x}-\sqrt{y})^2\mbox{ for all }x\leq M.
$$

Since $0 < a_{\infty},b_{\infty},c_{\infty} < M$, we have
\begin{equation}\label{C8} 
\begin{split}
& E(\overline{a},\overline{b},\overline{c}|a_{\infty},b_{\infty},c_{\infty}) = \bigg({\overline{a}\ln{\frac{\overline{a}}{a_{\infty}}}}-\overline{a}+a_{\infty}\bigg) + \bigg({\overline{b}\ln{\frac{\overline{b}}{b_{\infty}}}}-\overline{b}+b_{\infty}\bigg) 
\\& + \bigg({\overline{c}\ln{\frac{\overline{c}}{c_{\infty}}}}-\overline{c}+c_{\infty}\bigg) < \frac{\Psi(M,a_{\infty})}{(\sqrt{M}-\sqrt{a_{\infty}})^2}(\sqrt{\overline{a}}-\sqrt{a_{\infty}})^2 
\\& + \frac{\Psi(M,b_{\infty})}{(\sqrt{M}-\sqrt{b_{\infty}})^2}(\sqrt{\overline{b}}-\sqrt{b_{\infty}})^2 
+ \frac{\Psi(M,c_{\infty})}{(\sqrt{M}-\sqrt{c_{\infty}})^2}(\sqrt{\overline{c}}-\sqrt{c_{\infty}})^2
\\& \leq C_8\big[(\sqrt{\overline{a}}-\sqrt{a_{\infty}})^2 + (\sqrt{\overline{b}}-\sqrt{b_{\infty}})^2 + (\sqrt{\overline{c}}-\sqrt{c_{\infty}})^2 \big],
\end{split} 
\end{equation}
where $$C_8 := \max\bigg\{\frac{\Psi(M,a_{\infty})}{(\sqrt{M}-\sqrt{a_{\infty}})^2},\frac{\Psi(M,b_{\infty})}{(\sqrt{M}-\sqrt{b_{\infty}})^2},\frac{\Psi(M,c_{\infty})}{(\sqrt{M}-\sqrt{c_{\infty}})^2}\bigg\}.$$

%In order to get $ D(a,b,c|a_{\infty},b_{\infty},c_{\infty}) \geq E(\overline{a},\overline{b},\overline{c}|a_{\infty},b_{\infty},c_{\infty})$, we  need to show that there exists $C_9 > 0$ such that

Next we claim that there exists a real constant $C_9$ such that
\begin{equation}\label{C9}
\begin{split}
& \|\nabla{\sqrt{a}}\|_2^{2}+\|\nabla{\sqrt{b}}\|_2^{2}+\|\nabla{\sqrt{c}}\|_2^{2}+\bigg\|\sqrt{\frac{ab^2}{a_{\infty}b_{\infty}^2}}   - \sqrt{\frac{bc}{b_{\infty}c_{\infty}}}\bigg\|_2^{2} 
\\& > C_9\bigg[ \|\nabla{\sqrt{a}}\|_2^{2}+\|\nabla{\sqrt{b}}\|_2^{2}+\|\nabla{\sqrt{c}}\|_2^{2}+\bigg(\frac{\overline{\sqrt{a}}\overline{\sqrt{b}}^2}{\sqrt{a_{\infty}b_{\infty}^2}} - \frac{\overline{\sqrt{b}}\overline{\sqrt{c}}}{\sqrt{b_{\infty}c_{\infty}}}\bigg)^{2}\bigg].    
\end{split}
\end{equation}

In order to get the above estimate, we introduce the deviations from the mean, i.e. $\delta_a = \sqrt{a}-\overline{\sqrt{a}},\delta_b = \sqrt{b}-\overline{\sqrt{b}},\delta_c = \sqrt{c}-\overline{\sqrt{c}}$.
Now we make the decomposition 
$$
[0,1] = D_L \cup D_L^{\complement},
$$
where $D_L = \{ x \in [0,1]: |\delta_a|,|\delta_b|,|\delta_c| \leq L \}$ for a fixed constant $L$. We expand 
\begin{equation*}
\begin{split}
& \sqrt{ab^2} = \big(\overline{\sqrt{a}}+\delta_a\big)\big(\overline{\sqrt{b}}+\delta_b\big)^2 = \overline{\sqrt{a}}\overline{\sqrt{b}}^2 + \big[\delta_a\big(\overline{\sqrt{b}}+\delta_b\big)^2 + \overline{\sqrt{a}}\big(2\overline{\sqrt{b}}\delta_b+\delta_b^2\big)\big]
\end{split}
\end{equation*}
and
\begin{equation*}
\begin{split}
\sqrt{bc} &= \big(\overline{\sqrt{b}}+\delta_b\big)\big(\overline{\sqrt{c}}+\delta_c\big) = \overline{\sqrt{b}}\overline{\sqrt{c}} + \big[\delta_b\overline{\sqrt{c}} + \delta_c\big(\overline{\sqrt{b}}+\delta_b\big)\big]
\end{split}
\end{equation*}
to see that on the set $D_L$ one has
\begin{equation*}
\begin{split}
&\delta_a\big(\overline{\sqrt{b}}+\delta_b\big)^2 + \overline{\sqrt{a}}\big(2\overline{\sqrt{b}}\delta_b+\delta_b^2\big) 
\\& \leq  (|\delta_a|+|\delta_b|)\big[\big(\sqrt{M_2}+L\big)^2 + \sqrt{M_1}\big(2\sqrt{M_2}+L\big)\big]
 = (|\delta_a|+|\delta_b|)R_1
\end{split}
\end{equation*}
and
\begin{equation*}
\begin{split}
& \delta_b\overline{\sqrt{c}} + \delta_c\big(\overline{\sqrt{b}}+\delta_b\big)
\\& \leq  (|\delta_b|+|\delta_c|)\big[\sqrt{M_2} + (\sqrt{M_2}+L)\big]
= (|\delta_b|+|\delta_c|)R_2,
\end{split}
\end{equation*}
where $R_1 := \big(\sqrt{M_2}+L\big)^2 + \sqrt{M_1}\big(2\sqrt{M_2}+L\big)$ and $R_2 := \sqrt{M_2} + \big(\sqrt{M_2}+L\big)$.
Thus, 
\begin{equation}\label{R-constant}
\begin{split}
& \bigg\|\sqrt{\frac{ab^2}{a_{\infty}b_{\infty}^2}}   - \sqrt{\frac{bc}{b_{\infty}c_{\infty}}}\bigg\|_{L^{2}(D_L)}^{2}
 = \bigg\|\frac{\overline{\sqrt{a}}\overline{\sqrt{b}}^2}{\sqrt{a_{\infty}b_{\infty}^2}} - \frac{\overline{\sqrt{b}}\overline{\sqrt{c}}}{\sqrt{b_{\infty}c_{\infty}}} 
 \\&  + \frac{[\delta_a(\overline{\sqrt{b}}+\delta_b)^2 + \overline{\sqrt{a}}(2\overline{\sqrt{b}}\delta_b+\delta_b^2)]}{{\sqrt{a_{\infty}b_{\infty}^2}}} - \frac{ [\delta_b\overline{\sqrt{c}} + \delta_c(\overline{\sqrt{b}}+\delta_b)]}{\sqrt{b_{\infty}c_{\infty}}} \bigg\|_{L^{2}(D_L)}^{2}
\\& \geq \frac{1}{2}\bigg(\frac{\overline{\sqrt{a}}\overline{\sqrt{b}}^2}{\sqrt{a_{\infty}b_{\infty}^2}} - \frac{\overline{\sqrt{b}}\overline{\sqrt{c}}}{\sqrt{b_{\infty}c_{\infty}}}\bigg)^2|D_L| - 2\big\||\delta_a|+|\delta_b|\big\|_{L^{2}(D_L)}^{2}\frac{R_1^2}{a_{\infty}b_{\infty}^2} 
\\& - 2\big\||\delta_b|+|\delta_c|\big\|_{L^{2}(D_L)}^{2}\frac{R_2^2}{b_{\infty}c_{\infty}}
\\& \geq \frac{1}{2}\bigg(\frac{\overline{\sqrt{a}}\overline{\sqrt{b}}^2}{\sqrt{a_{\infty}b_{\infty}^2}} - \frac{\overline{\sqrt{b}}\overline{\sqrt{c}}}{\sqrt{b_{\infty}c_{\infty}}}\bigg)^2|D_L| - R(M_1,M_2,L)\big[\|\delta_a\|_{L^{2}(D_L)}^{2}+
\\& \|\delta_b\|_{L^{2}(D_L)}^{2}+\|\delta_c\|_{L^{2}(D_L)}^{2}\big],
\end{split} 
\end{equation}
\\
where $R(M_1,M_2,L) := \frac{4R_1^2}{a_{\infty}b_{\infty}^2}  + \frac{4R_2^2}{b_{\infty}c_{\infty}}$. 

On the set $D_L^{\complement}$, by using Poincar\'{e}'s inequality, we get 
\begin{equation*}
\begin{split}
& \|\nabla{\sqrt{a}}\|_2^{2}+\|\nabla{\sqrt{b}}\|_2^{2}+\|\nabla{\sqrt{c}}\|_2^{2} 
\\& \geq C_P (\|\delta_a\|_{L^{2}(D_L^{\complement})}^{2}+ \|\delta_b\|_{L^{2}(D_L^{\complement})}^{2}+\|\delta_c\|_{L^{2}(D_L^{\complement})}^{2})
\\& \geq C_P L^2 |D_L^{\complement}|. 
\end{split}
\end{equation*}
Since $$\bigg(\frac{\overline{\sqrt{a}}\overline{\sqrt{b}}^2}{\sqrt{a_{\infty}b_{\infty}^2}}  -  \frac{\overline{\sqrt{b}}\overline{\sqrt{c}}}{\sqrt{b_{\infty}c_{\infty}}}\bigg)^2 \leq \bigg(\frac{\sqrt{M_1}\sqrt{M_2}^2}{\sqrt{a_{\infty}b_{\infty}^2}}  +  \frac{\sqrt{M_2}\sqrt{M_2}}{\sqrt{b_{\infty}c_{\infty}}}\bigg)^2,$$ we infer 
\begin{equation}\label{tilde-R}
\|\nabla{\sqrt{a}}\|_2^{2}+\|\nabla{\sqrt{b}}\|_2^{2}+\|\nabla{\sqrt{c}}\|_2^{2} \geq \Tilde{R} \bigg(\frac{\overline{\sqrt{a}}\overline{\sqrt{b}}^2}{\sqrt{a_{\infty}b_{\infty}^2}}  -  \frac{\overline{\sqrt{b}}\overline{\sqrt{c}}}{\sqrt{b_{\infty}c_{\infty}}}\bigg)^2|D_L^{\complement}|,
\end{equation}
where $$\Tilde{R} := \frac{C_P L^2}{\bigg(\frac{\sqrt{M_1}\sqrt{M_2}^2}{\sqrt{a_{\infty}b_{\infty}^2}}  +  \frac{\sqrt{M_2}\sqrt{M_2}}{\sqrt{b_{\infty}c_{\infty}}}\bigg)^2}.$$
Pick $K > \frac{R+1}{\min\{1,C_P\}}$ and combine \eqref{R-constant} and \eqref{tilde-R} to conclude 
\begin{equation*}
\begin{split}
& 3K\big(\|\nabla{\sqrt{a}}\|_2^{2}+\|\nabla{\sqrt{b}}\|_2^{2}+\|\nabla{\sqrt{c}}\|_2^{2}\big) +  \bigg\|\sqrt{\frac{ab^2}{a_{\infty}b_{\infty}^2}}  - \sqrt{\frac{bc}{b_{\infty}c_{\infty}}}\bigg\|_2^{2} 
\\& \geq K\big(\|\nabla{\sqrt{a}}\|_2^{2}+\|\nabla{\sqrt{b}}\|_2^{2}+\|\nabla{\sqrt{c}}\|_2^{2}\big) + K\Tilde{R} \bigg(\frac{\overline{\sqrt{a}}\overline{\sqrt{b}}^2}{\sqrt{a_{\infty}b_{\infty}^2}}  -  \frac{\overline{\sqrt{b}}\overline{\sqrt{c}}}{\sqrt{b_{\infty}c_{\infty}}}\bigg)^2|D_L^{\complement}| \ + 
\\&  \bigg\{ \frac{1}{2}\bigg(\frac{\overline{\sqrt{a}}\overline{\sqrt{b}}^2}{\sqrt{a_{\infty}b_{\infty}^2}}  - \frac{\overline{\sqrt{b}}\overline{\sqrt{c}}}{\sqrt{b_{\infty}c_{\infty}}}\bigg)^2|D_L| -
 R\big(\|\delta_a\|_{L^{2}(D_L)}^{2}+ \|\delta_b\|_{L^{2}(D_L)}^{2}+\|\delta_c\|_{L^{2}(D_L)}^{2}\big) \bigg\}
\\&  + K\big(\|\delta_a\|_{L^{2}(D_L)}^{2}+ \|\delta_b\|_{L^{2}(D_L)}^{2}+\|\delta_c\|_{L^{2}(D_L)}^{2}\big)
\\& \geq K\big(\|\nabla{\sqrt{a}}\|_2^{2}+\|\nabla{\sqrt{b}}\|_2^{2}+\|\nabla{\sqrt{c}}\|_2^{2}\big) + \min\bigg\{K\Tilde{R},\frac{1}{2}\bigg\}\bigg(\frac{\overline{\sqrt{a}}\overline{\sqrt{b}}^2}{\sqrt{a_{\infty}b_{\infty}^2}}  -  \frac{\overline{\sqrt{b}}\overline{\sqrt{c}}}{\sqrt{b_{\infty}c_{\infty}}}\bigg)^2
\\& + (KC_P-R)\big(\|\delta_a\|_{L^{2}(D_L)}^{2}+ \|\delta_b\|_{L^{2}(D_L)}^{2}+\|\delta_c\|_{L^{2}(D_L)}^{2}\big)
\\& \geq C_{K,R}\bigg[ \|\nabla{\sqrt{a}}\|_2^{2}+\|\nabla{\sqrt{b}}\|_2^{2}+\|\nabla{\sqrt{c}}\|_2^{2}+\bigg(\frac{\overline{\sqrt{a}}\overline{\sqrt{b}}^2}{\sqrt{a_{\infty}b_{\infty}^2}} - \frac{\overline{\sqrt{b}}\overline{\sqrt{c}}}{\sqrt{b_{\infty}c_{\infty}}}\bigg)^{2}\bigg]. 
\end{split}
\end{equation*}
where $ C_{K,R} = \min\big\{K,K\Tilde{R},\frac{1}{2},KC_P-R\big\}=\min\big\{K\Tilde{R},\frac{1}{2}\big\}$ (because $K-R>1$).
Therefore, \eqref{C9} is proved with $C_9 = \frac{C_{K,R}}{3K}$.\\

It remains to show that there exists a constant $C_{10}$ such that
\begin{equation}\label{C10}
\begin{split}
 & \|\nabla{\sqrt{a}}\|_2^{2}+\|\nabla{\sqrt{b}}\|_2^{2}+\|\nabla{\sqrt{c}}\|_2^{2}+\bigg(\frac{\overline{\sqrt{a}}\overline{\sqrt{b}}^2}{\sqrt{a_{\infty}b_{\infty}^2}} - \frac{\overline{\sqrt{b}}\overline{\sqrt{c}}}{\sqrt{b_{\infty}c_{\infty}}}\bigg)^{2} \geq 
 \\& C_{10}\big[\big(\sqrt{\overline{a}}-\sqrt{a_{\infty}}\big)^2 + \big(\sqrt{\overline{b}}-\sqrt{b_{\infty}}\big)^2 + \big(\sqrt{\overline{c}}-\sqrt{c_{\infty}}\big)^2\big].
\end{split} 
\end{equation}
To this end, we introduce $\mu_a,\mu_b,\mu_c$ to parameterize
$\sqrt{\overline{a}},\sqrt{\overline{b}},\sqrt{\overline{c}}$ with 
$\sqrt{\overline{a}}=\sqrt{a_{\infty}}(1+\mu_a)$,$\sqrt{\overline{b}}=\sqrt{b_{\infty}}(1+\mu_b)$,$\sqrt{\overline{c}}=\sqrt{c_{\infty}}(1+\mu_c)$, where $-1 \leq \mu_a,\mu_b,\mu_c < \mu_k$ for $\mu_k = \frac{\sqrt{k}}{\min\{\sqrt{a_{\infty}},\sqrt{b_{\infty}},\sqrt{c_{\infty}}\}}-1$. Since $\delta_a = \sqrt{a} - \overline{\sqrt{a}}$, we have 
\begin{equation*}
    \begin{split}
& \|\delta_a\|_2^2 = \overline{a} - (\overline{\sqrt{a}})^2 = (\sqrt{\overline{a}} - \overline{\sqrt{a}})(\sqrt{\overline{a}} + \overline{\sqrt{a}})
\\& \implies \overline{\sqrt{a}} = -\frac{\|\delta_a\|_2^2}{\sqrt{\overline{a}} + \overline{\sqrt{a}}} + \sqrt{\overline{a}} = \sqrt{\overline{a}} - T(a)\|\delta_a\|_2^2,
    \end{split}
\end{equation*}
where $T(a) = \frac{1}{\sqrt{\overline{a}} + \overline{\sqrt{a}}} \leq \frac{1}{\epsilon}$; this inequality follows from $\overline{a}=\|a\|_{1} > \epsilon^{2} > 0$.
Similarly,
$$
\overline{\sqrt{b}}= \sqrt{\overline{b}} - T(b)\|\delta_b\|_2^2 \And   \overline{\sqrt{c}}= \sqrt{\overline{c}} - T(c)\|\delta_c\|_2^2,
$$
where $T(b) = \frac{1}{\sqrt{\overline{b}} + \overline{\sqrt{b}}},T(c) = \frac{1}{\sqrt{\overline{c}} + \overline{\sqrt{c}}} \leq \frac{1}{\epsilon}$.
And since  
$$
\epsilon^2 < \|b\|_{1} \leq \|\sqrt{b}\|_{\infty}\|\sqrt{b}\|_{1} \leq \sqrt{k}\|\sqrt{b}\|_{1} \implies \overline{\sqrt{b}} \geq \frac{\epsilon^2}{\sqrt{k}},
$$
due to this lower bound on $\overline{\sqrt{b}}$, we can factor out $(\frac{\overline{\sqrt{b}}}{\sqrt{b_{\infty}}})^2$ and reduce \eqref{3x3-system} to the system associated with the reversible reaction $A+B{\rightleftharpoons}C$ (which does not have boundary equilibria).
More precisely, we have
\begin{equation}\label{Bigg-Ineq}
    \begin{split}
& \bigg(\frac{\overline{\sqrt{a}}\overline{\sqrt{b}}^2}{\sqrt{a_{\infty}b_{\infty}^2}} - \frac{\overline{\sqrt{b}}\overline{\sqrt{c}}}{\sqrt{b_{\infty}c_{\infty}}}\bigg)^{2} = \frac{\big(\overline{\sqrt{b}}\big)^2}{b_{\infty}}\, \bigg(\frac{\overline{\sqrt{a}}\overline{\sqrt{b}}}{\sqrt{a_{\infty}b_{\infty}}} - \frac{\overline{\sqrt{c}}}{\sqrt{c_{\infty}}}\bigg)^{2} 
\\& \geq \frac{\epsilon^4}{b_{\infty}k}\bigg\{\frac{(\sqrt{\overline{a}} - T(a)\|\delta_a\|_2^2)(\sqrt{\overline{b}} - T(b)\|\delta_b\|_2^2)}{\sqrt{a_{\infty}b_{\infty}}} - \frac{\sqrt{\overline{c}} - T(c)\|\delta_c\|_2^2}{\sqrt{c_{\infty}}}\}^{2} 
\\& = \frac{\epsilon^4}{b_{\infty}k}
\{[1+\mu_a - \frac{T(a)\|\delta_a\|_2^2}{\sqrt{a_{\infty}}}][1+\mu_b - \frac{T(b)\|\delta_b\|_2^2}{\sqrt{b_{\infty}}}] - [1+\mu_c - \frac{ T(c)\|\delta_c\|_2^2}{\sqrt{c_{\infty}}}]\bigg\}^{2} 
\\& = \frac{\epsilon^4}{b_{\infty}k}
\bigg\{[(1+\mu_a)(1+\mu_b)-(1+\mu_c)]+\frac{T(a)\|\delta_a\|_2^2T(b)\|\delta_b\|_2^2}{\sqrt{a_{\infty}}} + \frac{ T(c)\|\delta_c\|_2^2}{\sqrt{c_{\infty}}}
\\& -\frac{T(a)\|\delta_a\|_2^2}{\sqrt{a_{\infty}}}(1+\mu_b)-\frac{T(b)\|\delta_b\|_2^2}{\sqrt{b_{\infty}}}(1+\mu_a)\bigg\}^2
\\& \geq \frac{\epsilon^4}{b_{\infty}k}
\bigg\{\frac{1}{2}[(1+\mu_a)(1+\mu_b)-(1+\mu_c)]^2-\bigg[\frac{T(a)\|\delta_a\|_2^2T(b)\|\delta_b\|_2^2}{\sqrt{a_{\infty}}} + \frac{ T(c)\|\delta_c\|_2^2}{\sqrt{c_{\infty}}}
\\& -\frac{T(a)\|\delta_a\|_2^2}{\sqrt{a_{\infty}}}(1+\mu_b)-\frac{T(b)\|\delta_b\|_2^2}{\sqrt{b_{\infty}}}(1+\mu_a)\bigg]^2\bigg\}
\\& \geq \frac{\epsilon^4}{b_{\infty}k}
\bigg\{\frac{1}{2}\big[(1+\mu_a)(1+\mu_b)-(1+\mu_c)\big]^2-4\bigg[\frac{T(a)\|\delta_a\|_2^2T(b)\|\delta_b\|_2^2}{\sqrt{a_{\infty}}}\bigg]^2\\
& - 4\bigg[\frac{ T(c)\|\delta_c\|_2^2}{\sqrt{c_{\infty}}}\bigg]^2
 - 4\bigg[\frac{T(a)\|\delta_a\|_2^2}{\sqrt{a_{\infty}}}(1+\mu_b)\bigg]^2 - 4\bigg[\frac{T(b)\|\delta_b\|_2^2}{\sqrt{b_{\infty}}}(1+\mu_a)\bigg]^2\bigg\}.
    \end{split}
\end{equation}
Since $\|\delta_a\|_2^2 = \overline{a} - \big(\overline{\sqrt{a}}\big)^2$, we get $\|\delta_a\|_2^2 \leq k$; similarly, $\|\delta_b\|_2^2,\|\delta_c\|_2^2 \leq k$. Combined with $T(a),T(b),$ $T(c) \leq \frac{1}{\epsilon}$ and $0\leq 1+ \mu_a,\ 1+ \mu_b,\ 1+ \mu_c\leq 1+\mu_k$, \eqref{Bigg-Ineq} gives
\begin{equation*}
    \begin{split}
& \bigg(\frac{\overline{\sqrt{a}}\overline{\sqrt{b}}^2}{\sqrt{a_{\infty}b_{\infty}^2}} - \frac{\overline{\sqrt{b}}\overline{\sqrt{c}}}{\sqrt{b_{\infty}c_{\infty}}}\bigg)^{2} \geq \frac{\epsilon^4}{b_{\infty}k}
\bigg\{\frac{1}{2}\big[(1+\mu_a)(1+\mu_b)-(1+\mu_c)\big]^2
\\& -\frac{4k^3}{\epsilon^4a_{\infty}}\|\delta_a\|_2^2 - \frac{4k}{\epsilon^2c_{\infty}}\|\delta_c\|_2^2  - \frac{4k(1+\mu_k)^2}{\epsilon^2a_{\infty}}\|\delta_a\|_2^2 - \frac{4k(1+\mu_k)^2}{\epsilon^2b_{\infty}}\|\delta_b\|_2^2\bigg\}
\\& \geq \frac{\epsilon^4}{2b_{\infty}k}\big[(1+\mu_a)(1+\mu_b)-(1+\mu_c)\big]^2 - C_{11}\big(\|\delta_a\|_2^2+\|\delta_b\|_2^2+\|\delta_c\|_2^2\big),
\end{split}
\end{equation*}
where $$C_{11} := \max\bigg\{\frac{4k^2 + 4(1+\mu_k)^2}{a_{\infty}b_{\infty}},\frac{ 4(1+\mu_k)^2}{b_{\infty}^2}, \frac{ 4(1+\mu_k)^2}{b_{\infty}c_{\infty}}\bigg\}.$$
Poincar\'{e}'s inequality reveals 
\begin{equation*}
\begin{split}
& \|\nabla{\sqrt{a}}\|_2^{2}+\|\nabla{\sqrt{b}}\|_2^{2}+\|\nabla{\sqrt{c}}\|_2^{2}+\bigg(\frac{\overline{\sqrt{a}}\overline{\sqrt{b}}^2}{\sqrt{a_{\infty}b_{\infty}^2}} - \frac{\overline{\sqrt{b}}\overline{\sqrt{c}}}{\sqrt{b_{\infty}c_{\infty}}}\bigg)^{2} 
\\& \geq C_{12}\big[(1+\mu_a)(1+\mu_b)-(1+\mu_c)\big]^2,
\end{split}
\end{equation*}
for $C_{12} := \frac{C_P\epsilon^4}{2b_{\infty}kC_{11}}$.
On the other hand,
$$
(\sqrt{\overline{a}}-\sqrt{a_{\infty}})^2 + (\sqrt{\overline{b}}-\sqrt{b_{\infty}})^2 + (\sqrt{\overline{c}}-\sqrt{c_{\infty}})^2 = 
a_{\infty}\mu_a^2+b_{\infty}\mu_b^2+c_{\infty}\mu_c^2,
$$
so we would like to compare $[(1+\mu_a)(1+\mu_b)-(1+\mu_c)]^2$ and $\mu_a^2+\mu_b^2+\mu_c^2$. 
The conservation laws \eqref{conservation-laws} (applied to $\overline{a},\ \overline{b},\ \overline{c}$ and $a_\infty$, $b_\infty$, $c_\infty$) yield 
\begin{equation*}
\begin{split}
& \frac{\overline{a}}{a_{\infty}}=(1+\mu_a), \ \frac{\overline{b}}{b_{\infty}}=(1+\mu_b), \ \frac{\overline{c}}{c_{\infty}}=(1+\mu_c),
\\& \overline{a}+\overline{c} = a_{\infty}+ c_{\infty} = M_1, \ \overline{b}+\overline{c} = b_{\infty}+ c_{\infty} = M_2,
\end{split}
\end{equation*}
and so,
$a_\infty\mu_a+c_\infty\mu_c=b_\infty\mu_b+c_\infty\mu_c=0$.
Thus, unless $\mu_a,\mu_b,\mu_c$ are all zero (trivial case!), we get $\mu_a\mu_c<0$ and $\mu_b\mu_c<0$. If $\mu_a,\mu_b >0$ and $0 >\mu_c$, then \begin{equation*}
\begin{split}
& [(1+\mu_a)(1+\mu_b)-(1+\mu_c)]^2 = (\mu_a\mu_b+\mu_a+\mu_b- \mu_c)^2 
\\& \geq (\mu_a\mu_b+\mu_a+\mu_b)^2 + (\mu_c)^2 > \mu_a^2+\mu_b^2+\mu_c^2.
\end{split}
\end{equation*}
Otherwise, if $\mu_a,\mu_b <0$ and $0 <\mu_c$, then 
\begin{equation*}
\begin{split}
& [(1+\mu_a)(1+\mu_b)-(1+\mu_c)]^2 = (\mu_a\mu_b+\mu_a+\mu_b- \mu_c)^2 
\\& \geq (\mu_a+\mu_b-\mu_c)^2 + (\mu_a\mu_b)^2 > \mu_a^2+\mu_b^2+\mu_c^2.
\end{split}
\end{equation*}
Therefore, in both cases we have  
$$
[(1+\mu_a)(1+\mu_b)-(1+\mu_c)]^2 \geq \mu_a^2+\mu_b^2+\mu_c^2.
$$
(Notice that when $\mu_a=\mu_b=\mu_c=0$, both sides of the inequality are equal to zero.)
Set 
$$C_{10} :=  \frac{C_{12}}{\max(a_{\infty},b_{\infty},c_{\infty})}$$
to conclude the proof of \eqref{C10}.\\

\noindent {\bf Proof of Theorem \ref{conv-theorem1}:}

\begin{proof} Finally, by \eqref{C7}, \eqref{C8}, \eqref{C9} and \eqref{C10}, we obtain
\begin{equation*} 
\begin{split}
& D(a,b,c|a_{\infty},b_{\infty},c_{\infty})
\\& \geq C_7C_9C_{10}\big[\big(\sqrt{\overline{a}}-\sqrt{a_{\infty}}\big)^2 + \big(\sqrt{\overline{b}}-\sqrt{b_{\infty}}\big)^2 + \big(\sqrt{\overline{c}}-\sqrt{c_{\infty}}\big)^2 \big]
\\& \geq \frac{C_7C_9C_{10}}{C_8}E(\overline{a},\overline{b},\overline{c}|a_{\infty},b_{\infty},c_{\infty}).
\end{split}
\end{equation*} 
In view of the above inequality and \eqref{LSI-1}, we discover 
$$
D(a,b,c|a_{\infty},b_{\infty},c_{\infty}) \geq C_{13}E(a,b,c|a_{\infty},b_{\infty},c_{\infty}),
$$
where 
$$C_{13} := \min\bigg(\frac{C_7C_9C_{10}}{C_8}, C_2\bigg).$$

In conclusion, we have proved that the solution decays exponentially to the positive equilibrium (with explicit rate).
\end{proof}

\section{Asymptotic decay for the two-species system}\label{2x2-Syst}
In this section we prove Theorem \ref{conv-theorem2}.
\subsection{Uniform boundedness and global existence for the two-species system}\label{2x2system}

To show the uniform boundedness for classical solutions to \eqref{2x2-system}
we estimate the $L^p$ norm and pass to the limit as $p\rightarrow\infty$. For the lower bound we have
\begin{equation*}
\begin{split}
& \dv{}{t}\int_{\Omega}{a^{-p}dx} = \int_{\Omega}{-pa^{-p-1}a_{t}dx} 
\\& = \int_{\Omega}{-pa^{-p-1}[d_a \Delta a + \bar m(a^{m_2}b^{n_2}-a^{m_1}b^{n_1})]dx}
\\& = \int_{\Omega}{\big[-p(p-1)a^{-p-2}d_a|\nabla{a}|^2 -\bar m pa^{-p-1} (a^{m_2}b^{n_2}-a^{m_1}b^{n_1})\big]dx}
\\& = \int_{\Omega}\big[{-p(p-1)a^{-p-2}d_a|\nabla{a}|^2 -\bar m pa^{-p-1} a^{m_2}b^{n_1}(b^{\bar n}-a^{\bar m})\big] dx}.
    \end{split}
\end{equation*}
Similarly, we have 
\begin{equation*}
\begin{split}
& \dv{}{t}\int_{\Omega}{b^{-q}dx} 
\\& = \int_{\Omega}{\big[-q(q-1)b^{-q-2}d_b|\nabla{b}|^2 -\bar n qb^{-q-1} (a^{m_1}b^{n_1}-a^{m_2}b^{n_2})\big]dx}
\\& = \int_{\Omega}{\big[-q(q-1)b^{-q-2}d_b|\nabla{b}|^2 -\bar n qb^{-q-1}a^{m_2}b^{n_1} (a^{\bar m}-b^{\bar{n}})\big] dx}.
    \end{split}
\end{equation*}
Therefore we can let $p > 1$ be sufficiently large so that $q = \frac{(p+1)\bar{n}}{\bar m}-1 > 1$; then 
\begin{equation*}
\begin{split}
& \dv{}{t}\int_{\Omega}{\bigg(\frac{a^{-p}}{p\bar m} + \frac{b^{-q}}{q\bar{n}}\bigg)dx} 
\\& = \int_{\Omega}{\bigg[-\frac{(p+1)}{\bar m}a^{-p-2}d_a|\nabla{a}|^2 -a^{-p-1} a^{m_2}b^{n_1}(b^{\bar{n}}-a^{\bar m})\bigg] dx} 
\\& + \int_{\Omega}{\bigg[-\frac{(q+1)}{\bar{n}}b^{-q-2}d_b|\nabla{b}|^2 -b^{-q-1}a^{m_2}b^{n_1} (a^{\bar m}-b^{\bar{n}})\bigg]dx}
\\& = \int_{\Omega}{\bigg[-\frac{(p+1)}{\bar m}a^{-p-2}d_a|\nabla{a}|^2 -\frac{(q+1)}{\bar{n}}b^{-q-2}d_b|\nabla{b}|^2\bigg] dx} 
\\& + \int_{\Omega}{(b^{\bar{n}}-a^{\bar m})(b^{-q-1} - a^{-p-1})a^{m_2}b^{n_1} dx} \leq 0.
    \end{split}
\end{equation*}
This gives us
$$
 \int_{\Omega}{\frac{a^{-p}}{p\bar m} dx} \leq \int_{\Omega}{\bigg(\frac{a_{0}^{-p}}{p\bar m} + \frac{b_{0}^{-q}}{q\bar{n}}\bigg) dx} \leq \frac{\alpha^{-p}}{p\bar m} + \frac{\alpha^{-q}}{q\bar{n}}
$$
$$
\implies \int_{\Omega}{a^{-p} dx} \leq \alpha^{-p} + \frac{\alpha^{-q}p\bar m}{q\bar{n}} \leq \alpha^{-p} + \alpha^{-q}\frac{{\bar m}^2}{{\bar{n}}^2} \leq \alpha^{-p} + \alpha^{-q}.
$$
Since 
$$
q+1 = \frac{(p+1)\bar{n}}{\bar m} \implies \frac{q}{p} < \frac{(p+1)\bar{n}}{p\bar m} < 2\frac{\bar{n}}{\bar m},
$$
as we we let $p \rightarrow \infty$, we have
$$
\bigg\|\frac{1}{a}\bigg\|_p < L_a := \max(2,2\alpha^{-2\frac{\bar{n}}{\bar m}}) \implies \bigg\|\frac{1}{a}\bigg\|_{\infty} < L_a.
$$
Also we have a similar result on b, i.e.
$$
 \int_{\Omega}{\frac{b^{-q}}{q\bar{n}} dx} \leq \int_{\Omega}{\bigg(\frac{a_{0}^{-p}}{p\bar{m}} + \frac{b_{0}^{-q}}{q\bar{n}}\bigg) dx} \leq \frac{\alpha^{-p}}{p\bar{m}} + \frac{\alpha^{-q}}{q\bar{n}}
$$
$$
\implies \int_{\Omega}{b^{-q} dx} \leq \frac{\alpha^{-p}q\bar{n}}{p\bar{m}} + \alpha^{-q} \leq 2\alpha^{-p}{\bigg(\frac{\bar{n}}{\bar{m}}\bigg)}^2 + \alpha^{-q} .
$$
Again, as $p \rightarrow \infty$, we have

$$
\bigg\|\frac{1}{b}\bigg\|_q < L_b :=\bigg[2{\bigg(\frac{\bar{n}}{\bar{m}}\bigg)}^2+ 1\bigg] \cdot \max\bigg(1,\frac{1}{\alpha}\bigg) \implies \bigg\|\frac{1}{b}\bigg\|_{\infty} < L_b.
$$
Thus, we get the uniform lower bound $\epsilon^2$ for $a(x,t)$ and $b(x,t)$ with $\epsilon^2 := \min(L_a^{-1},L_b^{-1})$. As for the upper bound, we use the same method to see that
\begin{equation*}
 \dv{}{t}\int_{\Omega}{a^{p}dx}  = \int_{\Omega}{\big[-p(p-1)a^{p-2}d_a|\nabla{a}|^2 +\bar m pa^{p-1} a^{m_2}b^{n_1}(b^{\bar{n}}-a^{\bar{m}})\big] dx}.
\end{equation*}
Similarly, we have
\begin{equation*}
\begin{split}
& \dv{}{t}\int_{\Omega}{b^{q}dx} 
 = \int_{\Omega}{\big[-q(q-1)b^{q-2}d_b|\nabla{b}|^2 +\bar n qb^{q-1}a^{m_2}b^{n_1} (a^{\bar{m}}-b^{\bar{n}})\big] dx}.
    \end{split}
\end{equation*}
Therefore, we can let $p > 1$ be sufficient large to render $q = \frac{(p-1)\bar{n}}{\bar{m}}+1 > 1$; then 
\begin{equation*}
\begin{split}
& \dv{}{t}\int_{\Omega}{\bigg(\frac{a^{p}}{p\bar{m}} + \frac{b^{q}}{q\bar{n}}\bigg)dx} 
\\& = \int_{\Omega}{\bigg[-\frac{(p-1)}{\bar{m}}a^{p-2}d_a|\nabla{a}|^2 -\frac{(q-1)}{\bar{n}}b^{q-2}d_b|\nabla{b}|^2\bigg] dx} 
\\& - \int_{\Omega}{(b^{\bar{n}}-a^{\bar{m}})(b^{q-1} - a^{p-1})a^{m_2}b^{n_1} dx} \leq 0.
    \end{split}
\end{equation*}
This gives us
$$
 \int_{\Omega}{\frac{a^{p}}{p\bar{m}} dx} \leq \int_{\Omega}{\bigg(\frac{a_{0}^{p}}{p\bar{m}} + \frac{b_{0}^{q}}{q\bar{n}}\bigg) dx} \leq \frac{\beta^{p}}{p\bar{m}} + \frac{\beta^{q}}{q\bar{n}}
$$
$$
\implies \int_{\Omega}{a^{p} dx} \leq \beta^{p} + \beta^{q}\frac{p\bar{m}}{q\bar{n}} \leq \beta^{p} + \beta^{q}.
$$
But
$$
q-1 = \frac{(p-1)\bar{n}}{\bar{m}} \implies \frac{q}{p} < \frac{q-1}{p-1} = \frac{\bar{n}}{\bar{m}},
$$
so, when we let $p \rightarrow \infty$, we obtain
$$
\|a\|_p < U_a := \max(2,2\beta^{\frac{\bar{n}}{\bar{m}}}) \implies \|a\|_{\infty} < U_a.
$$
We have a similar estimate on $b$,
$$
 \int_{\Omega}{\frac{b^{q}}{q\bar{n}} dx} \leq \int_{\Omega}\bigg({\frac{a_{0}^{p}}{p\bar{m}} + \frac{b_{0}^{q}}{q\bar{n}}\bigg) dx} \leq \frac{\beta^{p}}{p\bar{m}} + \frac{\beta^{q}}{q\bar{n}} 
$$
$$
\implies \int_{\Omega}{b^{q} dx} \leq \frac{\beta^{p}q\bar{n}}{p\bar{m}} + \beta^{q} \leq \beta^{p}{\bigg(\frac{\bar{n}}{\bar{m}}\bigg)}^2 + \beta^{q} .
$$
Again, as $p \rightarrow \infty$, we get
$$
\|b\|_q < U_b = \bigg[{\bigg(\frac{\bar{n}}{\bar{m}}\bigg)}^2+ 1\bigg] \cdot \max(1,\beta) \implies \|b\|_{\infty} < U_b.
$$
Thus, we get the uniform upper bound $\omega$ for $a(x,t)$ and $b(x,t)$ with $\omega := \max(U_a,U_b)$, and this also implies the existence of a unique global classical solution.

\subsection{Convergence for the two species system}

We use the same entropy entropy dissipation method to obtain an explicit exponential convergence rate for the two species system in any dimension.

Again we introduce the  relative entropy 
\begin{equation*} 
\begin{split}
& E(a,b|a_{\infty},b_{\infty}) =  \int_{\Omega}\bigg({a\ln{\frac{a}{a_{\infty}}}}-a+a_{\infty}\bigg)dx+\int_{\Omega}\bigg({b\ln{\frac{b}{b_{\infty}}}}-b+b_{\infty}\bigg)dx
\end{split} 
\end{equation*}
and its corresponding entropy dissipation
\begin{equation*} 
\begin{split}
& D(a,b|a_{\infty},b_{\infty}) = d_a\int_{\Omega}{\frac{|\nabla{a}|^2}{a}}dx+d_b\int_{\Omega}{\frac{|\nabla{b}|^2}{b}}dx
\\& + a_{\infty}^{m_1}b_{\infty}^{n_1}\int_{\Omega}{\Psi{\bigg(\frac{a^{m_1}b^{n_1}}{a_{\infty}^{m_1}b_{\infty}^{n_1}} ; \frac{a^{m_2}b^{n_2}}{a_{\infty}^{m_2}b_{\infty}^{n_2}}\bigg)}}dx + a_{\infty}^{m_2}b_{\infty}^{n_2}\int_{\Omega}{\Psi{\bigg( \frac{a^{m_2}b^{n_2}}{a_{\infty}^{m_2}b_{\infty}^{n_2}} ; \frac{a^{m_1}b^{n_1}}{a_{\infty}^{m_1}b_{\infty}^{n_1}}\bigg)}}dx.
\end{split} 
\end{equation*}
Due to the following identity 
$$
E(a,b|a_{\infty},b_{\infty}) = E(a,b|\overline{a},\overline{b}) + E(\overline{a},\overline{b}|a_{\infty},b_{\infty})
$$
and the Logarithmic Sobolev Inequality \eqref{LSI} we have
\begin{equation}\label{D1}
d_a\int_{\Omega}{\frac{|\nabla{a}|^2}{a}}dx+d_b\int_{\Omega}{\frac{|\nabla{b}|^2}{b}}dx \geq 
D_1  E(a,b|\overline{a},\overline{b}),
\end{equation}
where $D_1 = \min(d_a,d_b) \cdot C_{LSI}$.

From inequality \eqref{Psi-ineq} we get the following estimate
\begin{equation} \label{D2}
\begin{split}
& D(a,b|a_{\infty},b_{\infty}) = d_a\int_{\Omega}{\frac{|\nabla{a}|^2}{a}}dx+d_b\int_{\Omega}{\frac{|\nabla{b}|^2}{b}}dx
\\& + a_{\infty}^{m_1}b_{\infty}^{n_1}\int_{\Omega}{\Psi{\bigg(\frac{a^{m_1}b^{n_1}}{a_{\infty}^{m_1}b_{\infty}^{n_1}} ; \frac{a^{m_2}b^{n_2}}{a_{\infty}^{m_2}b_{\infty}^{n_2}}\bigg)}}dx + a_{\infty}^{m_2}b_{\infty}^{n_2}\int_{\Omega}{\Psi{\bigg( \frac{a^{m_2}b^{n_2}}{a_{\infty}^{m_2}b_{\infty}^{n_2}} ; \frac{a^{m_1}b^{n_1}}{a_{\infty}^{m_1}b_{\infty}^{n_1}}\bigg)}}dx
\\& \geq 4d_a\|\nabla{\sqrt{a}}\|_2^{2}+4d_b\|\nabla{\sqrt{b}}\|_2^{2}+4d_c\|\nabla{\sqrt{c}}\|_2^{2}
\\& + a_{\infty}^{m_1}b_{\infty}^{n_1}\bigg\|\sqrt{\frac{a^{m_1}b^{n_1}}{a_{\infty}^{m_1}b_{\infty}^{n_1}}}   - \sqrt{\frac{a^{m_2}b^{n_2}}{a_{\infty}^{m_2}b_{\infty}^{n_2}}}\bigg\|_2^{2} + a_{\infty}^{m_2}b_{\infty}^{n_2}\bigg\| \sqrt{\frac{a^{m_2}b^{n_2}}{a_{\infty}^{m_2}b_{\infty}^{n_2}}} - \sqrt{\frac{a^{m_1}b^{n_1}}{a_{\infty}^{m_1}b_{\infty}^{n_1}}}\bigg\|_2^{2}
\\& \geq D_2\bigg(\|\nabla{\sqrt{a}}\|_2^{2}+\|\nabla{\sqrt{b}}\|_2^{2}+\|\nabla{\sqrt{c}}\|_2^{2}+\bigg\|\sqrt{\frac{a^{m_1}b^{n_1}}{a_{\infty}^{m_1}b_{\infty}^{n_1}}}   - \sqrt{\frac{a^{m_2}b^{n_2}}{a_{\infty}^{m_2}b_{\infty}^{n_2}}}\bigg\|_2^{2}\bigg),
\end{split} 
\end{equation}
where $D_2 = \min(4d_a,4d_b,4d_c,a_{\infty}^{m_1}b_{\infty}^{n_1}+a_{\infty}^{m_2}b_{\infty}^{n_2})$.

The obvious conservation law
$$\bar n\overline{a(t)}+\bar m\overline{b(t)}=\bar n\overline{a_0}+\bar m\overline{b_0}\mbox{ for all }t>0$$ shows there exists $N$ such that $\overline{a},\overline{b} < N$.
Thus, \eqref{psi-increase} yields
$$
 \Psi(x,y) \leq \frac{\Psi(N,y)}{(\sqrt{N}-\sqrt{y})^2}(\sqrt{x}-\sqrt{y})^2\mbox{ for all }x \leq N.
$$
Since $0 < a_{\infty},b_{\infty} < N$,
\begin{equation} \label{D3}
\begin{split}
& E(\overline{a},\overline{b}|a_{\infty},b_{\infty}) = \bigg({\overline{a}\ln{\frac{\overline{a}}{a_{\infty}}}}-\overline{a}+a_{\infty}\bigg) + \bigg({\overline{b}\ln{\frac{\overline{b}}{b_{\infty}}}}-\overline{b}+b_{\infty}\bigg) 
\\&  < \frac{\Psi(N,a_{\infty})}{(\sqrt{N}-\sqrt{a_{\infty}})^2}(\sqrt{\overline{a}}-\sqrt{a_{\infty}})^2 
+ \frac{\Psi(N,b_{\infty})}{(\sqrt{N}-\sqrt{b_{\infty}})^2}(\sqrt{\overline{b}}-\sqrt{b_{\infty}})^2 
\\& \leq D_3\big[\big(\sqrt{\overline{a}}-\sqrt{a_{\infty}}\big)^2 + \big(\sqrt{\overline{b}}-\sqrt{b_{\infty}}\big)^2 \big],
\end{split} 
\end{equation}
where 
$$D_3 = \max\bigg\{\frac{\Psi(N,a_{\infty})}{(\sqrt{N}-\sqrt{a_{\infty}})^2},\frac{\Psi(N,b_{\infty})}{(\sqrt{N}-\sqrt{b_{\infty}})^2}\bigg\}.$$

Now we claim there exists a constant $D_4 > 0$ such that
\begin{equation}\label{D4}
\begin{split}
& \|\nabla{\sqrt{a}}\|_2^{2}+\|\nabla{\sqrt{b}}\|_2^{2}+
\bigg\|\sqrt{\frac{a^{m_1}b^{n_1}}{a_{\infty}^{m_1}b_{\infty}^{n_1}}}   - \sqrt{\frac{a^{m_2}b^{n_2}}{a_{\infty}^{m_2}b_{\infty}^{n_2}}}\bigg\|_2^{2} 
\\& > D_4 \bigg\{ \|\nabla{\sqrt{a}}\|_2^{2}+\|\nabla{\sqrt{b}}\|_2^{2}+
\bigg(\frac{{\overline{\sqrt{a}}}^{m_1}{\overline{\sqrt{b}}}^{n_1}}{\sqrt{a_{\infty}^{m_1}b_{\infty}^{n_1}}} - \frac{\overline{\sqrt{a}}^{m_2} \overline{\sqrt{b}}^{n_2}}{\sqrt{a_{\infty}^{m_2}b_{\infty}^{n_2}}}\bigg)^{2} \bigg\} . 
\end{split}
\end{equation}

Now we again introduce the deviations $\delta_a = \sqrt{a}-\overline{\sqrt{a}},\delta_b = \sqrt{b}-\overline{\sqrt{b}}$ and 
make the decomposition 
$$
\Omega = D_L \cup D_L^{\complement},
$$
where $D_L := \{ x \in \Omega\,:|\,\delta_a|,|\delta_b| \leq L \}$ with a fixed constant $L$. On the set $D_L$ we get
\begin{equation*}
\begin{split}
\sqrt{a^{m_1}b^{n_1}}  &= (\overline{\sqrt{a}}+\delta_a)^{m_1}(\overline{\sqrt{b}}+\delta_b)^{n_1} 
\\& \leq \overline{\sqrt{a}}^{m_1} \cdot \overline{\sqrt{b}}^{n_1} +  (|\delta_a|+|\delta_b|)R_1(|\delta_a|,|\delta_b|,\overline{\sqrt{a}},\overline{\sqrt{b}}),
\end{split}
\end{equation*}

\begin{equation*}
\begin{split}
\sqrt{a^{m_2}b^{n_2}} &= (\overline{\sqrt{a}}+\delta_a)^{m_2}(\overline{\sqrt{b}}+\delta_b)^{n_2} 
\\& \leq \overline{\sqrt{a}}^{m_2} \cdot \overline{\sqrt{b}}^{n_2} + (|\delta_a|+|\delta_b|)R_2(|\delta_a|,|\delta_b|,\overline{\sqrt{a}},\overline{\sqrt{b}}),
\end{split}
\end{equation*}
where $R_1 $ and $R_2$ are finite due to the boundedness of $|\delta_a|,|\delta_b|,\overline{\sqrt{a}},\overline{\sqrt{b}}$.
Then we get 
\begin{equation*}
\begin{split}
& \bigg\|\sqrt{\frac{a^{m_1}b^{n_1}}{a_{\infty}^{m_1}b_{\infty}^{n_1}}}   - \sqrt{\frac{a^{m_2}b^{n_2}}{a_{\infty}^{m_2}b_{\infty}^{n_2}}}\bigg\|_{L^{2}(D_L)}^{2}
\\& \geq \frac{1}{2}\bigg(\frac{{\overline{\sqrt{a}}}^{m_1}{\overline{\sqrt{b}}}^{n_1}}{\sqrt{a_{\infty}^{m_1}b_{\infty}^{n_1}}} - \frac{\overline{\sqrt{a}}^{m_2} \overline{\sqrt{b}}^{n_2}}{\sqrt{a_{\infty}^{m_2}b_{\infty}^{n_2}}}\bigg)^2|D_L| - 2\|(|\delta_a|+|\delta_b|)\|_{L^{2}(D_L)}^{2}\frac{R_1^2}{a_{\infty}^{m_1}b_{\infty}^{n_1}} 
\\& - 2\|(|\delta_a|+|\delta_b|)\|_{L^{2}(D_L)}^{2}\frac{R_2^{2}}{a_{\infty}^{m_2}b_{\infty}^{n_2}}
\\& \geq \frac{1}{2}\bigg(\frac{{\overline{\sqrt{a}}}^{m_1}{\overline{\sqrt{b}}}^{n_1}}{\sqrt{a_{\infty}^{m_1}b_{\infty}^{n_1}}} - \frac{\overline{\sqrt{a}}^{m_2} \overline{\sqrt{b}}^{n_2}}{\sqrt{a_{\infty}^{m_2}b_{\infty}^{n_2}}}\bigg)^2|D_L| - R(|\delta_a|,|\delta_b|,\overline{\sqrt{a}},\overline{\sqrt{b}})[\|\delta_a\|_{L^{2}(D_L)}^{2}+
\\& \|\delta_b\|_{L^{2}(D_L)}^{2}],
\end{split} 
\end{equation*}
where $R(|\delta_a|,|\delta_b|,\overline{\sqrt{a}},\overline{\sqrt{b}}) = \frac{4R_1^2}{a_{\infty}^{m_1}b_{\infty}^{n_1}}  + \frac{4R_2^{2}}{a_{\infty}^{m_2}b_{\infty}^{n_2}}$ is finite  (depends on the choice of $L$ and $N$).

On the set $D_L^{\complement}$, by using Poincar\'{e}'s inequality, we get 
\begin{equation*}
\begin{split}
& \|\nabla{\sqrt{a}}\|_2^{2}+\|\nabla{\sqrt{b}}\|_2^{2}
\geq C_P \big(\|\delta_a\|_{L^{2}(D_L^{\complement})}^{2}+ \|\delta_b\|_{L^{2}(D_L^{\complement})}^{2}\big)
\geq C_P L^2 |D_L^{\complement}| .
\end{split}
\end{equation*}
Since $$ \bigg|\frac{{\overline{\sqrt{a}}}^{m_1}{\overline{\sqrt{b}}}^{n_1}}{\sqrt{a_{\infty}^{m_1}b_{\infty}^{n_1}}} - \frac{\overline{\sqrt{a}}^{m_2} \overline{\sqrt{b}}^{n_2}}{\sqrt{a_{\infty}^{m_2}b_{\infty}^{n_2}}}\bigg| \leq\frac{\sqrt{N}^{m_1+n_1}}{\sqrt{a_{\infty}^{m_1}b_{\infty}^{n_1}}}  +  \frac{\sqrt{N}^{m_2+n_2}}{a_{\infty}^{m_2}b_{\infty}^{n_2}},$$ we infer 
$$
\|\nabla{\sqrt{a}}\|_2^{2}+\|\nabla{\sqrt{b}}\|_2^{2} \geq \Tilde{R} \bigg(\frac{{\overline{\sqrt{a}}}^{m_1}{\overline{\sqrt{b}}}^{n_1}}{\sqrt{a_{\infty}^{m_1}b_{\infty}^{n_1}}} - \frac{\overline{\sqrt{a}}^{m_2} \overline{\sqrt{b}}^{n_2}}{\sqrt{a_{\infty}^{m_2}b_{\infty}^{n_2}}}\bigg)^2|D_L^{\complement}|,
$$
where 
$$\Tilde{R} = C_P L^2{\bigg(\frac{\sqrt{N}^{m_1+n_1}}{\sqrt{a_{\infty}^{m_1}b_{\infty}^{n_1}}}  +  \frac{\sqrt{N}^{m_2+n_2}}{a_{\infty}^{m_2}b_{\infty}^{n_2}}\bigg)^{-2}}.$$
We combine the above two parts, pick $K > \frac{R+1}{\min\{1,C_P\}}$ and have the following
\begin{equation*}
\begin{split}
& 3K(\|\nabla{\sqrt{a}}\|_2^{2}+\|\nabla{\sqrt{b}}\|_2^{2}) +  \bigg\|\sqrt{\frac{a^{m_1}b^{n_1}}{a_{\infty}^{m_1}b_{\infty}^{n_1}}}   - \sqrt{\frac{a^{m_2}b^{n_2}}{a_{\infty}^{m_2}b_{\infty}^{n_2}}}\bigg\|_2^{2}  
\\& \geq K(\|\nabla{\sqrt{a}}\|_2^{2}+\|\nabla{\sqrt{b}}\|_2^{2}) + K\Tilde{R} \bigg(\frac{{\overline{\sqrt{a}}}^{m_1}{\overline{\sqrt{b}}}^{n_1}}{\sqrt{a_{\infty}^{m_1}b_{\infty}^{n_1}}} - \frac{\overline{\sqrt{a}}^{m_2} \overline{\sqrt{b}}^{n_2}}{\sqrt{a_{\infty}^{m_2}b_{\infty}^{n_2}}}\bigg)^2|D_L^{\complement}| 
\\& +\bigg\{ \frac{1}{2}\bigg(\frac{{\overline{\sqrt{a}}}^{m_1}{\overline{\sqrt{b}}}^{n_1}}{\sqrt{a_{\infty}^{m_1}b_{\infty}^{n_1}}} - \frac{\overline{\sqrt{a}}^{m_2} \overline{\sqrt{b}}^{n_2}}{\sqrt{a_{\infty}^{m_2}b_{\infty}^{n_2}}}\bigg)^2|D_L| - R\big[\|\delta_a\|_{L^{2}(D_L)}^{2}+ \|\delta_b\|_{L^{2}(D_L)}^{2}\big] \bigg\}
\\&  + KC_P(\|\delta_a\|_{L^{2}(D_L)}^{2}+ \|\delta_b\|_{L^{2}(D_L)}^{2})
\\& \geq C_{K,R}\bigg[ \|\nabla{\sqrt{a}}\|_2^{2}+\|\nabla{\sqrt{b}}\|_2^{2}+\bigg(\frac{{\overline{\sqrt{a}}}^{m_1}{\overline{\sqrt{b}}}^{n_1}}{\sqrt{a_{\infty}^{m_1}b_{\infty}^{n_1}}} - \frac{\overline{\sqrt{a}}^{m_2} \overline{\sqrt{b}}^{n_2}}{\sqrt{a_{\infty}^{m_2}b_{\infty}^{n_2}}}\bigg)^2\bigg], 
\end{split}
\end{equation*}
where $ C_{K,R} := \min\{K\Tilde{R},\frac{1}{2}\}$.
We can fix $L >0$ and get the corresponding $R$, then pick sufficiently large $K$ (e.g. $K > R+1$) such that we obtain \eqref{D4} with $D_4 = \frac{C_{K,R}}{3K}$.

It remains to show that there exists a constant $D_{5}$ such that
\begin{equation}\label{D5}
\begin{split}
 & \|\nabla{\sqrt{a}}\|_2^{2}+\|\nabla{\sqrt{b}}\|_2^{2}+\bigg(\frac{{\overline{\sqrt{a}}}^{m_1}{\overline{\sqrt{b}}}^{n_1}}{\sqrt{a_{\infty}^{m_1}b_{\infty}^{n_1}}} - \frac{\overline{\sqrt{a}}^{m_2} \overline{\sqrt{b}}^{n_2}}{\sqrt{a_{\infty}^{m_2}b_{\infty}^{n_2}}}\bigg)^2 
 \\& > D_{5}\big[\big(\sqrt{\overline{a}}-\sqrt{a_{\infty}}\big)^2 + \big(\sqrt{\overline{b}}-\sqrt{b_{\infty}}\big)^2\big].
\end{split} 
\end{equation}

We again introduce $\mu_a,\mu_b$ to parameterize
$\sqrt{\overline{a}},\sqrt{\overline{b}}$ with 
$\sqrt{\overline{a}}=\sqrt{a_{\infty}}(1+\mu_a)$,$\sqrt{\overline{b}}=\sqrt{b_{\infty}}(1+\mu_b)$, where $\mu_{\epsilon} \leq \mu_a,\mu_b < \mu_\omega$ with $\mu_{\epsilon} = \frac{\epsilon}{\max\{\sqrt{a_{\infty}},\sqrt{b_{\infty}}\}}-1$ and $\mu_\omega = \frac{\sqrt{\omega}}{\min\{\sqrt{a_{\infty}},\sqrt{b_{\infty}}\}}-1$.
We have 
$ 
\overline{\sqrt{a}} = -\frac{\|\delta_a\|_2^2}{\sqrt{\overline{a}} + \overline{\sqrt{a}}} + \sqrt{\overline{a}} = \sqrt{\overline{a}} - T(a)\|\delta_a\|_2^2
$
, where $T(a) = \frac{1}{\sqrt{\overline{a}}+\overline{\sqrt{a}}} $.
Similarly, 
$
\overline{\sqrt{b}}= \sqrt{\overline{b}} - T(b)\|\delta_b\|_2^2 
$
, where $T(b) = \frac{1}{\sqrt{\overline{b}} + \overline{\sqrt{b}}}$. Both $T(a), T(b)$ have uniform (in time) upper and lower bounds. Since we get the lower bound for $\overline{\sqrt{a}},\overline{\sqrt{b}}$, we can factor out 
$\bigg(\frac{{\overline{\sqrt{a}}}^{m_2}{\overline{\sqrt{b}}}^{n_1}}{\sqrt{a_{\infty}}^{m_2}\sqrt{b_{\infty}}^{n_1}}\bigg)^2$ to reduce the original system \eqref{2x2-system} to the system associated with the reaction $\bar m A{\rightleftharpoons}\bar n B$ (which does not  have boundary equilibria).
Then we get 
\begin{equation*}
    \begin{split}
& \bigg(\frac{{\overline{\sqrt{a}}}^{m_1}{\overline{\sqrt{b}}}^{n_1}}{\sqrt{a_{\infty}^{m_1}b_{\infty}^{n_1}}} - \frac{\overline{\sqrt{a}}^{m_2} \overline{\sqrt{b}}^{n_2}}{\sqrt{a_{\infty}^{m_2}b_{\infty}^{n_2}}}\bigg)^2  = \frac{({\overline{\sqrt{a}}}^{m_2}{\overline{\sqrt{b}}}^{n_1})^2}{a_{\infty}^{m_2}b_{\infty}^{n_1}} \cdot \bigg(\frac{{\overline{\sqrt{a}}}^{\bar{m}}}{\sqrt{a_{\infty}}^{\bar{m}}} - \frac{\overline{\sqrt{b}}^{\bar{n}}}{\sqrt{b_{\infty}}^{\bar{n}}}\bigg)^{2} 
\\& \geq \frac{\epsilon^{2(m_2+n_1)}}{a_{\infty}^{m_2}b_{\infty}^{n_1}}
\bigg[\bigg(1+\mu_a - \frac{T(a)\|\delta_a\|_2^2}{\sqrt{a_{\infty}}}\bigg)^{\bar{m}}-\bigg(1+\mu_b - \frac{T(b)\|\delta_b\|_2^2}{\sqrt{b_{\infty}}}\bigg)^{\bar{n}}\bigg]^{2} =:A.
\end{split}
\end{equation*}
We evaluate 
\begin{equation*}
    \begin{split}
& A= \frac{\epsilon^{2(m_2+n_1)}}{a_{\infty}^{m_2}b_{\infty}^{n_1}}
\big\{[(1+\mu_a )^{\bar{m}}+ 
\|\delta_a\|_2 S_1(\mu_a,\|\delta_a\|_2,T(a))]
-[(1+\mu_b )^{\bar{n}}
\\& + 
\|\delta_b\|_2 S_2(\mu_b,\|\delta_b\|_2,T(b))]\big\}^{2} 
\\& \geq \frac{\epsilon^{2(m_2+n_1)}}{a_{\infty}^{m_2}b_{\infty}^{n_1}}
\bigg\{\frac{1}{2}[(1+\mu_a )^{\bar{m}}-(1+\mu_b )^{\bar{n}}]^2-2[\|\delta_a\|_2 S_1-\|\delta_b\|_2 S_2]^2\bigg\}
\\& \geq \frac{\epsilon^{2(m_2+n_1)}}{a_{\infty}^{m_2}b_{\infty}^{n_1}}
\bigg\{\frac{1}{2}[(1+\mu_a )^{\bar{m}}-(1+\mu_b )^{\bar{n}}]^2-4(\|\delta_a\|_2^2 +\|\delta_b\|_2^2)S\bigg\},
\end{split}
\end{equation*}
where $S = \max{(|S_1|,|S_2|)}$.

We have
$
\|\delta_a\|_2^2 = \overline{a} - (\overline{\sqrt{a}})^2 \leq \omega
$, similarly $\|\delta_b\|_2^2 \leq \omega$ and $T(a),T(b) \leq \frac{1}{\epsilon}$,
 $ \mu_a,\mu_b < \mu_\omega$; so $S$ is uniformly bounded and 
\begin{equation*}
    \begin{split}
& \bigg(\frac{{\overline{\sqrt{a}}}^{m_1}{\overline{\sqrt{b}}}^{n_1}}{\sqrt{a_{\infty}^{m_1}b_{\infty}^{n_1}}} - \frac{\overline{\sqrt{a}}^{m_2} \overline{\sqrt{b}}^{n_2}}{\sqrt{a_{\infty}^{m_2}b_{\infty}^{n_2}}}\bigg)^2 
\\& \geq \frac{\epsilon^{2(m_2+n_1)}}{2a_{\infty}^{m_2}b_{\infty}^{n_1}}
[(1+\mu_a )^{\bar{m}}-(1+\mu_b )^{\bar{n}}]^2-D_6(\|\delta_a\|_2^2 +\|\delta_b\|_2^2),
\end{split}
\end{equation*}
where $D_{6} = 4\frac{\epsilon^{2(m_2+n_1)}}{a_{\infty}^{m_2}b_{\infty}^{n_1}}\|S\|_{\infty}$.
Poincar\'{e}'s inequality yields 
\begin{equation*}
\begin{split}
 & \|\nabla{\sqrt{a}}\|_2^{2}+\|\nabla{\sqrt{b}}\|_2^{2}+\bigg(\frac{{\overline{\sqrt{a}}}^{m_1}{\overline{\sqrt{b}}}^{n_1}}{\sqrt{a_{\infty}^{m_1}b_{\infty}^{n_1}}} - \frac{\overline{\sqrt{a}}^{m_2} \overline{\sqrt{b}}^{n_2}}{\sqrt{a_{\infty}^{m_2}b_{\infty}^{n_2}}}\bigg)^2 
 \\& > D_{7}[(1+\mu_a )^{\bar{m}}-(1+\mu_b )^{\bar{n}}]^2,
\end{split} 
\end{equation*}
where $D_{7} = \frac{\epsilon^{2(m_2+n_1)}}{2a_{\infty}^{m_2}b_{\infty}^{n_1}} D_6^{-1}$.
On the other hand,
$$
(\sqrt{\overline{a}}-\sqrt{a_{\infty}})^2 + (\sqrt{\overline{b}}-\sqrt{b_{\infty}})^2 = 
a_{\infty}\mu_a^2+b_{\infty}\mu_b^2,
$$
so we need to compare $[(1+\mu_a )^{\bar{m}}-(1+\mu_b )^{\bar{n}}]^2$ with $\mu_a^2+\mu_b^2$. The conservation law gives
$$
\frac{\overline{a}}{a_{\infty}}=(1+\mu_a), \ \frac{\overline{b}}{b_{\infty}}=(1+\mu_b),
\ \bar n \overline{a}+ \bar{m} \overline{b} = \bar n a_{\infty}+ \bar m b_{\infty},
$$
so, as long as $\mu_a,\mu_b$ are nonzero, $\mu_a$ and $\mu_b$ must have different signs. If $\mu_a >0 >\mu_b$, then 
$$
 [(1+\mu_a )^{\bar m}-(1+\mu_b )^{\bar n}]^2 
 \geq [(1+\mu_a )-(1+\mu_b )]^2 > \mu_a^2+\mu_b^2.
$$
Otherwise, 
$$
 [(1+\mu_a )^{\bar m}-(1+\mu_b )^{\bar n}]^2 
 = [(1+\mu_b )^{\bar n}-(1+\mu_a )^{\bar m}]^2  > \mu_a^2+\mu_b^2,
$$
so in both cases we have  
$$
[(1+\mu_a )^{\bar m}-(1+\mu_b )^{\bar n}]^2 \geq \mu_a^2+\mu_b^2.
$$
Notice that when $\mu_a=\mu_b=0$, both sides of the inequality are equal to zero.\\

\noindent {\bf Proof of Theorem \ref{conv-theorem2}:}

\begin{proof}
Set $D_{5} = \frac{D_{7}}{\max(a_{\infty},b_{\infty})}$ to see that \eqref{D5} holds, and then combine \eqref{D2}, \eqref{D3}, \eqref{D4} and \eqref{D5} to reveal
$$
 D(a,b|a_{\infty},b_{\infty})
 \geq \frac{D_2D_4D_{5}}{D_3}E(\overline{a},\overline{b}|a_{\infty},b_{\infty}).
$$ 
In view of \eqref{D1}, we get 
$$
D(a,b|a_{\infty},b_{\infty}) \geq D_{8}E(a,b|a_{\infty},b_{\infty})
$$
for $D_{8} = \min(\frac{D_2D_4D_{5}}{D_3}, D_1) $, which finally proves that the solution decays exponentially to the positive equilibrium at an explicit rate.
\end{proof}

\section{Remarks and open problems}\label{sec:remarks}
We believe this approach works to prove and quantify the decay rate to the complex balanced equilibrium for more general systems of the type
$$A_1+A_2+...+A_{n-1}+mA_n{\rightleftharpoons}A_n+A_{n+1},$$
and even more complex systems such as
$$A_1+A_2+...+A_{n-1}+mA_n{\rightleftharpoons}A_n+A_{n+1}{\leftrightharpoons}B_1+...+B_k+lA_n,$$
where $k\geq 1$ and $l,\ m\geq 2$ are integers. 

In order to obtain the uniform essential bound on the densities we used the $L^\infty$ version of Poincar\'{e}'s inequality, which is only available in 1D. Can we employ more refined techniques in order to prove the essential bounds in higher dimensions? These are some questions we plan to address in future work.

\section{Appendix}

\begin{lemma}\label{Psi-lemma}
For any $x,\ y>0$ we have 
\begin{equation}\label{Psi-ineq}
\Psi{(x,y)}=x\ln{\frac{x}{y}}-x+y \geq (\sqrt{x}-\sqrt{y})^2.
\end{equation}
\end{lemma}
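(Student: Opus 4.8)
The plan is to reduce the two–variable inequality \eqref{Psi-ineq} to the single elementary fact that the relative–entropy density $\Psi$ is itself nonnegative, by means of an exact factorization. Both sides of \eqref{Psi-ineq} are positively homogeneous of degree one in $(x,y)$ (replacing $(x,y)$ by $(\lambda x,\lambda y)$ scales each side by $\lambda$), so the claim is equivalent to the one–variable statement obtained by setting $y=1$; but rather than carry out that normalization and a calculus computation, I would expose the underlying structure directly through an identity.

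The engine of the argument is the algebraic identity
\begin{equation*}
\Psi(x,y)-(\sqrt{x}-\sqrt{y})^2=2\sqrt{x}\,\Psi(\sqrt{x},\sqrt{y}),
\end{equation*}
which I would verify by a direct expansion. Writing $s=\sqrt{x}$ and $\sigma=\sqrt{y}$, the left–hand side becomes $2s^2\ln(s/\sigma)-s^2+\sigma^2-(s-\sigma)^2=2s^2\ln(s/\sigma)-2s^2+2s\sigma=2s\bigl(s\ln(s/\sigma)-s+\sigma\bigr)$, and the parenthetical expression is exactly $\Psi(s,\sigma)=\Psi(\sqrt{x},\sqrt{y})$. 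Since $\sqrt{x}>0$, the desired inequality \eqref{Psi-ineq} is therefore equivalent to the nonnegativity of $\Psi$ evaluated at the point $(\sqrt{x},\sqrt{y})$.

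It then only remains to record the baseline fact that $\Psi(p,q)\ge0$ for all $p,q>0$. This follows by writing $\Psi(p,q)=q\,\phi(p/q)$ with $\phi(t):=t\ln t-t+1$ and noting that $\phi'(t)=\ln t$ vanishes only at $t=1$, where $\phi''(t)=1/t>0$; hence $t=1$ is the global minimum with $\phi(1)=0$, so $\phi\ge0$ and $\Psi\ge0$, with equality precisely when $p=q$. Substituting back into the identity yields \eqref{Psi-ineq}, with equality exactly when $x=y$. There is no substantive obstacle in this proof; the only step requiring insight is spotting the factorization through $\Psi(\sqrt{x},\sqrt{y})$, which converts the claim into the (already implicitly used) nonnegativity of the relative–entropy density and thereby avoids a direct derivative analysis of the difference of the two sides.
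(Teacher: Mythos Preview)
Your proof is correct and proceeds by a genuinely different route from the paper's. The paper splits into the two cases $x>y$ and $y>x$: in the first case it applies Jensen's inequality to $s\mapsto[y+s(x-y)]^{-1}$ to obtain the logarithmic-mean estimate $\frac{\ln x-\ln y}{x-y}\ge\frac{2}{x+y}$, combines this with the AM--GM inequality $x+y>2\sqrt{xy}$, and simplifies; in the second case it applies Jensen's inequality to the exponential to get $\frac{x-y}{\ln x-\ln y}\ge\sqrt{xy}$, and again simplifies. Your argument instead rests on the single exact identity $\Psi(x,y)-(\sqrt{x}-\sqrt{y})^2=2\sqrt{x}\,\Psi(\sqrt{x},\sqrt{y})$, which reduces the claim in one stroke to the standard nonnegativity of the relative-entropy density $\Psi$. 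Your approach is shorter, avoids the case split, and yields the equality case $x=y$ transparently; the paper's approach has the minor advantage of being self-contained in the sense that it derives the needed logarithmic-mean inequalities from first principles rather than invoking $\Psi\ge0$ (which, to be fair, you also prove).
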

\proof{} The case $x=y$ is trivial.  If $x>y>0$ we use the Jensen inequality for the convex function $f(s):=[y+s(x-y)]^{-1}$ to get
$$\frac{\ln x-\ln y}{x-y}=\int_0^1f(s)ds\geq f\bigg(\int_0^1sds\bigg)=\frac{2}{x+y}.$$
By using this inequality and $x+y>2\sqrt{xy}$, we conclude
\begin{equation*}
    \begin{split}
& \Psi{(x,y)} \geq x\frac{2(x-y)}{(x+y)}-x+y
 =x\bigg(2-\frac{4y}{x+y}\bigg)-x+y 
 \\& > x\bigg(2-\frac{4y}{2\sqrt{xy}}\bigg)-x+y = x-2\sqrt{xy}+y = (\sqrt{x}-\sqrt{y})^2.
    \end{split}
\end{equation*}
Or suppose we have $y>x> 0$ and set $g(u)=e^u$, $u(s)=a+s(b-a)$, for $b>a$. Jensen's inequality shows
$$\frac{e^b-e^a}{b-a}=\int_0^1g(u(s))ds\geq g\bigg(\int_0^1 u(s)ds\bigg)=e^{\frac{a+b}{2}}.$$
Let $b=\ln y,\ a=\ln x$ to deduce
\begin{equation}\label{ln-sqrt}
\frac{x-y}{\ln{x}-\ln{y}} \geq \sqrt{xy},
\end{equation}
which implies $$ \frac{x-y}{\sqrt{xy}} \leq \ln{\bigg(\frac{x}{y}\bigg)}.$$
It follows
\begin{equation*}
    \begin{split}
& \Psi{(x,y)} \geq x\frac{(x-y)}{\sqrt{xy}}-x+y
 =\sqrt{\frac{x}{y}}x-\sqrt{xy}-x+y
 \\& =\sqrt{\frac{x}{y}}x+\sqrt{xy}-2\sqrt{xy}-x+y \geq 2x-2\sqrt{xy}-x+y = (\sqrt{x}-\sqrt{y})^2.
    \end{split}
\end{equation*}
\endproof

The second important tool is:
\begin{lemma}\label{lemma-2}
 For each fixed $y > 0$,
\begin{equation}\label{psi-increase}
 \psi(x,y) := \frac{\Psi{(x,y)}}{(\sqrt{x}-\sqrt{y})^2}\mbox{ is increasing in }x\in(0,\infty).   
 \end{equation}
 \end{lemma}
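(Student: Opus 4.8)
The plan is to reduce the two-variable monotonicity claim to a one-variable one by a scaling substitution, and then to analyze a single auxiliary function whose sign change governs everything. Fix $y>0$ and set $t=\sqrt{x/y}$, so that $x=yt^2$ and $t$ is a strictly increasing function of $x$ on $(0,\infty)$. A direct computation gives $\Psi(x,y)=y(2t^2\ln t-t^2+1)$ and $(\sqrt{x}-\sqrt{y})^2=y(t-1)^2$, so the common factor $y$ cancels and $\psi(x,y)=h(t)$ with
$$h(t):=\frac{2t^2\ln t-t^2+1}{(t-1)^2}.$$
Since $t=\sqrt{x/y}$ is increasing in $x$, it suffices to prove that $h$ is increasing in $t$ on $(0,\infty)$ (the removable singularity at $t=1$ being handled by continuity, where one checks $h(1)=2$).

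Next I would compute $h'$. Writing $N(t):=2t^2\ln t-t^2+1$ one finds $N'(t)=4t\ln t$, and the quotient rule, after cancelling one factor of $(t-1)$, gives
$$h'(t)=\frac{(t-1)N'(t)-2N(t)}{(t-1)^3}=\frac{2\bigl(t^2-1-2t\ln t\bigr)}{(t-1)^3}.$$
Thus the sign of $h'(t)$ is controlled entirely by the auxiliary function $\phi(t):=t^2-1-2t\ln t$ together with the sign of $(t-1)^3$.

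The key step is to show that $\phi$ crosses zero at $t=1$ in the same direction as $(t-1)^3$, i.e. that $\phi$ is nonpositive on $(0,1)$ and nonnegative on $(1,\infty)$. For this I would note $\phi(1)=0$ and differentiate: $\phi'(t)=2(t-1-\ln t)$. The elementary inequality $\ln t\le t-1$ (with equality only at $t=1$) shows $\phi'(t)\ge0$ on $(0,\infty)$, so $\phi$ is increasing, and therefore $\phi(t)\le0$ for $t\le1$ while $\phi(t)\ge0$ for $t\ge1$. Matching signs: for $t>1$ both the numerator $2\phi(t)$ and the denominator $(t-1)^3$ are nonnegative, whereas for $0<t<1$ both are nonpositive; in either case $h'(t)\ge0$, so $h$ is increasing, as claimed.

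The only mildly delicate point is the behavior at $t=1$, where both $h$ and $h'$ are a priori of the indeterminate form $0/0$. I would resolve this by a short Taylor expansion of $N$ about $t=1$, namely $N(t)=2(t-1)^2+\tfrac23(t-1)^3+\cdots$, which shows that $h$ extends to a $C^1$ function there (with $h(1)=2$ and $h'(1)=\tfrac23>0$), so monotonicity holds across $t=1$ as well. Apart from this, the argument is entirely elementary, resting only on the cancellation that reduces $(t-1)N'(t)-2N(t)$ to $2\phi(t)$ and on the standard logarithmic inequality $\ln t\le t-1$.
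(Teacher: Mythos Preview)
Your proof is correct and follows essentially the same route as the paper: both reduce via the substitution $t=k=\sqrt{x/y}$ to showing that $k-\tfrac1k-2\ln k$ (equivalently your $\phi(t)=t\bigl(t-\tfrac1t-2\ln t\bigr)$) has the same sign as $t-1$. The only difference lies in that final step---the paper invokes the logarithmic--geometric mean inequality $\tfrac{u-v}{\ln u-\ln v}\ge\sqrt{uv}$ (proved earlier in the appendix) with $u=k$, $v=1/k$, whereas you differentiate $\phi$ and use $\ln t\le t-1$; your handling of the removable singularity at $t=1$ is also more explicit than the paper's.
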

\proof{} Suppose $x > y > 0$ and set $k = \sqrt{\frac{x}{y}} > 1$. We have
\begin{equation*}
    \begin{split}
& \dv{\psi{(x,y)}}{x}=\frac{\ln{\big(\frac{x}{y}\big)}(\sqrt{x}-\sqrt{y})^2-\big[x\ln{\big(\frac{x}{y}\big)}-x+y\big]\big(\sqrt{x}-\sqrt{y}\big)\frac{1}{\sqrt{x}}}{(\sqrt{x}-\sqrt{y})^4}
\\& = \frac{\ln{\big(\frac{x}{y}\big)}\big(\sqrt{x}-\sqrt{y}\big)-\big[x\ln{\big(\frac{x}{y}\big)}-x+y\big]\frac{1}{\sqrt{x}}}{(\sqrt{x}-\sqrt{y})^3}
\\& = \frac{\big[2(k-1)\ln k-\big(2k\ln{k}-k+\frac{1}{k}\big)\big]\sqrt{y}}{(\sqrt{x}-\sqrt{y})^3} 
= \frac{\big(k-2\ln{k}-\frac{1}{k}\big)\sqrt{y}}{(\sqrt{x}-\sqrt{y})^3}.
    \end{split}
\end{equation*}
Since we have $k>1$, we use \eqref{ln-sqrt} to obtain
$$
\frac{k-\frac{1}{k}}{2\ln{k}} = \frac{k-\frac{1}{k}}{\ln{k}-\ln{(\frac{1}{k}})} \geq \sqrt{k \cdot \frac{1}{k}}
 \implies k-2\ln{k}-\frac{1}{k} \geq 0.
$$
Combine this with $\sqrt{x}-\sqrt{y} \geq 0$ and $y > 0$ to get 
$$
\dv{\psi{(x,y)}}{x} \geq 0.
$$
We can use the same way to show this inequality is correct when $y > x > 0$.
\endproof

\section{Acknowledgments} 

Gheorghe Craciun and Jiaxin Jin acknowledge support from the National Science Foundation under grants DMS--1412643 and DMS--1816238. Casian Pantea was partially supported by National Science Foundation grant DMS--1517577. Adrian Tudorascu acknowledges support from the National Science Foundation under grant DMS--1600272, and from the MSRI Research Professorship in the program ``Hamiltonian systems, from topology to applications through analysis'' from the Fall of 2018.


\begin{thebibliography}{99}

\bibitem{Anderson.2008aa}
D.F. Anderson, 
{\em Global asymptotic stability for a class of nonlinear chemical equations,} 
{SIAM J. Appl. Math}, 68:5, 2008. 

\bibitem{Anderson.2010aa}
D.F. Anderson, A. Shiu,
{\em The dynamics of weakly reversible population processes near facets,}
SIAM J. Appl. Math. 70:6, 2010.

\bibitem{Anderson.2011aa}
D. F. Anderson, 
{\em A proof of the Global Attractor Conjecture in the single linkage class case},
 SIAM J. Appl. Math., 71:4, 2011.

\bibitem{AMTU}{ A. Arnold, P. Markowich, G. Toscani, A. Unterreiter}, {\em On convex Sobolev inequalities and the rate of convergence to equilibrium for Fokker-Planck type equations}, Comm. Partial Differential Equations {\bf 26} (2001), pp. 43--100.

\bibitem{ChenLiWright} W.~Chen, C.~Li, and E.~Wright. 
\newblock {\em On A Nonlinear Parabolic System-Modeling Chemical Reactions In Rivers.} \newblock {Communications On Pure And Applied Analysis}, 4(4):889--899, 2005.

\bibitem{ChoulliKayser}{ M. Choulli, L. Kayser}, {\em Observations on Gaussian upper bounds for Neumann Heat Kernels},  Bulletin of the Australian Mathematical Society {\bf 92}, no. 3 (2015), pp. 429--439.

\bibitem{Craciun.2009aa}
G. Craciun, A. Dickenstein, A. Shiu, B. Sturmfels, 
{\em Toric Dynamical Systems, }
{Journal of Symbolic Computation} 44:11,  2009.

\bibitem{Craciun.2013aa}
G. Craciun, F. Nazarov, C. Pantea,
{\em Persistence and permanence of mass-action and power-law dynamical systems,}
SIAM J. Appl. Math. 73, 2013.

\bibitem{Craciun.gac} G.~Craciun. \newblock Toric Differential Inclusions and a Proof of the Global Attractor Conjecture. \newblock {arXiv:1501.02860}, 2016.

\bibitem{DF06}{ L. Desvillettes, K. Fellner}, {\em Exponential decay toward equilibrium via entropy methods for reaction-diffusion equations},J. Math. Anal. Appl., {\bf 319}  (2006), pp. 157--176.

\bibitem{DF08}{ L. Desvillettes, K. Fellner}, {\em Entropy Methods for Reaction-Diffusion Equations: Slowly Growing A-priori Bounds}, Rev. Mat. Iberoamericana, {\bf 24} (2008), pp. 407--431.

\bibitem{DF14}{ L. Desvillettes, K. Fellner}, {\em Exponential Convergence to Equilibrium for a Nonlinear Reaction-Diffusion Systems Arising in Reversible Chemistry}, System Modelling and Optimization, IFIP AICT, {\bf 443} (2014), pp.96--104.

\bibitem{DFPV07}{ L. Desvillettes, K. Fellner, M. Pierre, J. Vovelle}, {\em About Global Existence for Quadratic Systems of Reaction-Diffusion}, J. Adv. Nonlinear Stud. {\bf 7} (2007), pp. 491--511.

\bibitem{DFT2}{ L. Desvillettes, K. Fellner, B.Q. Tang}, {\em Trend to equilibrium for Reaction-Diffusion system arising from Complex Balanced Chemical Reaction Networks}, SIAM J. Math. Anal. {\bf 49}, no. 4 (2017), 2666--2709.

\bibitem{Feinberg.1972}
M. Feinberg.
{\em Complex balancing in general kinetic systems,}
Archive for Rational Mechanics and Analysis 49:3,
1972.

\bibitem{Fellner.2015aa} K.~Fellner, W.~Prager, and B.~Q.~Tang. \newblock {\em The entropy method for reaction-diffusion systems without detailed balance: first order chemical reaction networks.} \newblock {arXiv:1504.08221}, 2015.

\bibitem{FMS92}{ W.E. Fitzgibbon, J. Morgan, R. Sanders}, {\em Global existence and boundedness for a class of inhomogeneous semilinear parabolic systems}, Nonlin. Anal. {\bf 19}, no. 9 (1992), pp. 885--899. 

\bibitem{Gopalkrishnan.2014}
Manoj Gopalkrishnan,  Ezra Miller and Anne Shiu.
{\em A geometric approach to the global attractor conjecture} 
SIAM J. Appl. Dyn. Syst., 13:2, 2014.

\bibitem{Gopalkrishnan.2013}
Manoj Gopalkrishnan,  Ezra Miller and Anne Shiu.
{\em A projection argument for differential inclusions, with applications to persistence of mass-action kinetics}
SIGMA 9, 2013.

\bibitem{Horn.1972}
F. Horn, R. Jackson,
{\em General mass action kinetics, }
Archive for Rational Mechanics and Analysis 47,  1972.

\bibitem{Horn.1974}
F. Horn, 
{\em The dynamics of open reaction systems.} 
Mathematical aspects of chemical and biochemical problems and quantum chemistry, 
SIAM-AMS Proceedings Vol. VIII,  1974. 

\bibitem{LadSolUral}{ O.A. Ladyzenskaya, V.A. Solonnikov, N.N. Ural'ceva}, {\em Linear and quasilinear equations of parabolic type}, Trans. Math. Monographs, AMS {\bf 23} (1995).

\bibitem{MHM}{ A. Mielke, J. Haskovec, P. A. Markowich}, {\em On uniform decay of the entropy for reaction-diffusion systems}, J. Dynam. Differential Equations {\bf 27} (2015), pp. 897--928.

\bibitem{FatmaCA}{ F. Mohamed, C. Pantea, A. Tudorascu}, {\em  Chemical reaction-diffusion networks; convergence of the method of lines}, J. Math. Chem. {\bf 56}, no. 1 (2018), pp. 30--68.

\bibitem{Pantea.2012}
{ C. Pantea},
{\em On the persistence and global stability of mass-action systems}, 
SIAM J. Math. Anal. 44:3, 2012. 

\bibitem{Protter-Weinberger}{ M.H. Protter, H.F. Weinberger}, \newblock {\em Maximum Principles in Differential Equations}, Springer, 2nd Edition (1984).

\bibitem{Rothe.1984}
F. Rothe. 
{\em Global solutions of reaction-diffusion systems.} 
Lecture Notes in Mathematics. Vol 1072. Springer, 1984.

\bibitem{Siegel.2000}
D. Siegel, D. MacLean. 
{\em Global stability of complex balanced mechanisms}, 
J. Math. Chem. 27:1, 2000.

\bibitem{Sontag.2001}
E. Sontag. 
{\em Structure and stability of certain chemical networks and applications to the kinetic proofreading model of T-cell
receptor signal transduction},
IEEE Trans. Automat. Control 46, 2001.

\end{thebibliography}
\end{document}